\newtheorem{thm}{Theorem}[section]
\newtheorem{lem}[thm]{Lemma}
\newtheorem{cor}[thm]{Corollary}
\theoremstyle{definition}
\newtheorem{example}[thm]{Example}
\newtheorem{defn}[thm]{Definition}
\numberwithin{equation}{thm}
\def\hom{\mbox{\rm Hom}}
\begin{document}
\title[QUOTIENTS OF ONE-SIDED TRIANGULATED CATEGORIES]
{QUOTIENTS OF ONE-SIDED TRIANGULATED CATEGORIES BY RIGID
SUBCATEGORIES AS MODULE CATEGORIES}

\author{Zengqiang Lin and Yang Zhang}
\address{ School of Mathematical sciences, Huaqiao University,
Quanzhou\quad 362021,  China.} \email{lzq134@163.com}
\address{ School of Mathematical sciences, Huaqiao University,
Quanzhou\quad 362021,  China.} \email{zy100912@hqu.edu.cn}

\thanks{Supported by the NSF of China (Grants 11126331, 11101084)}

\subjclass[2000]{18E10, 18E30}

\keywords{Quotient category; rigid subcategory; right triangulated
category; left triangulated category.}

\begin{abstract}
We prove that some subquotient categories of one-sided triangulated
categories are abelian. This unifies a result by Iyama-Yoshino in
the case of triangulated categories and a result by Demonet-Liu in
the case of exact categories.
\end{abstract}

\maketitle

\section{Introduction}

\par
Cluster tilting theory gives a way to construct abelian categories
from some triangulated categories. Let $H$ be a hereditary algebra
over a field $k$, and $\mathcal{C}$ be the cluster category defined
in [1] as the factor category $D^b(\mbox{mod}H)/\tau^{-1}\Sigma$,
where $\tau$ and $\Sigma$ be the  Auslander-Reiten translation
 and shift functor of $D^b(\mbox{mod}H)$ respectively.
For  a cluster tilting object $T$ in $\mathcal{C}$, Buan, Marsh and
Reiten [2] showed that $\mathcal{C}/{\text{add}\tau T}\cong\text{mod
}\text{End}_{\mathcal{C}}(T)^{\text{op}}$.  Keller and Reiten [3]
generalized this result in the case of 2-Calabi-Yau triangulated
categories by showing that ${\mathcal{C}/\Sigma\mathcal{T}}\cong
\mbox{mod}{\mathcal{T}}$, where ${\mathcal{T}}$ is a cluster tilting
subcategory of $\mathcal{C}$. A general framework for cluster
tilting is set up by Koenig and Zhu. They [4] showed that any
quotient of a triangulated category modulo a cluster titling
subcategory carries an abelian structure . Let $\mathcal{C}$ be a
triangulated category and $\mathcal{M}$ be a rigid subcategory, i.e.
Hom$_{\mathcal{C}}(\mathcal{M},\Sigma\mathcal{M})=0$. Iyama and
Yoshino [5] showed that
${\mathcal{M}\ast\Sigma\mathcal{M}/\Sigma\mathcal{M}}\cong\mbox{mod}\mathcal{M}$.
In particular, if $\mathcal{M}$ is a cluster tilting subcategory,
then $\mathcal{M}\ast\Sigma\mathcal{M}=\mathcal{C}$, thus the work
generalized some former results in [2,3,4].

Recently, Cluster tilting theory is also permitted to construct
abelian categories from some exact categories. Let $\mathcal{B}$ be
an exact category with enough projectives  and $\mathcal{M}$ be a
cluster tilting subcategory. Demonet and Liu [6] showed that
$\mathcal{B}/\mathcal{M}\cong\mbox{mod}\underline{\mathcal{M}}$,
which generalized the work of Koenig and Zhu in the case of
Frobenius categories.

The main aim of this article is to unify the work of Iyama-Yoshino
and Demonet-Liu, and give a framework for construct abelian
categories from triangulated categories and exact categories. Our
setting is one-sided  triangulated category, which is a natural
generalization of triangulated category.  Left and right
triangulated categories were defined by Beligiannis and Marmaridsis
in [7]. For details and more information on
 one-sided  triangulated
categories we refer to [7-9].

%Recall that $\mathcal{M}$ is called a rigid subcategory of a
%triangulated category $\mathcal{C}$, if
%Hom$_{\mathcal{C}}(\mathcal{M},\Sigma\mathcal{M})=0$, where $\Sigma$
%is the suspension functor of $\mathcal{C}$. Similarly, recall that
%$\mathcal{T}$ is called a rigid subcategory of an exact category
%$\mathcal{B}$, if
%Ext$^{1}_{\mathcal{B}}(\mathcal{T},\mathcal{T})=0$. By observing, we
%define the notion of rigid subcategories of one-sided triangulated
%categories (seen in Definition 3.4 and 4.5), which generalizes the
%two notions of rigid subcategories of triangulated categories and of
%exact categories. The aim of this paper is to unify the two
%constructions of the subquotient triangulated categories by
%Iyama-Yoshino in [4] and of the subquotient exact categories by
%Demonet-Liu in [5].

The paper is organized as follows. In Section 2, we review some
basic material on module categories over $k$-linear categories and
quotient categories etc. In Section 3, we prove that some
subquotient categories of right triangulated categories are module
categories, which unifies the Proposition 6.2 in [4] and the Theorem
3.5 in [5]. In Section 4, we prove that some subquotient categories
of left triangulated categories are module categories, which unifies
the Proposition 6.2 in [4] and the Theorem 3.2 in [5]. And we will
see that the case of right triangulated categories and the case of
left triangulated categories are not  dual.

\section{Preliminaries}

Throughout this paper, $k$ denotes a field. When we say that
$\mathcal{C}$ is a category, we always assume that $\mathcal{C}$ is
a Hom-finite Krull-Schmidt $k$-linear category. For a subcategory
$\mathcal{M}$ of category $\mathcal{C}$, we mean $\mathcal{M}$ is an
additive full subcategory of $\mathcal{C}$ which is closed under
taking direct summands. Let $f:X\rightarrow Y$, $g:Y\rightarrow Z$
be morphisms in $\mathcal{C}$, we denote by $gf$ the composition of
$f$ and $g$, and $f_{\ast}$ the morphism $\hom_{C}(M,f):
\mbox{Hom}_\mathcal{C}(M,X)\rightarrow \hom_{\mathcal{C}}(M,Y)$ for
any $M\in\mathcal{C}$.

%For an object $X$ of $\mathcal{C}$, denoted by $X\in\mathcal{C}$.
%For a subcategory $\mathcal{T}$ of $\mathcal{C}$, denoted by
%$\mathcal{T}\subset\mathcal{C}$.

Let $\mathcal{C}$ be a category and $\mathcal{X}$ be a subcategory
of $\mathcal{C}$. A right $\mathcal{X}$-approximation of $C$ in
$\mathcal{C}$ is a map $f: X\rightarrow C$, with $X$ in
$\mathcal{X}$, such that for all objects $X'$ in $\mathcal{X}$, the
sequence
Hom$_{\mathcal{C}}(X',X)\rightarrow$Hom$_{\mathcal{C}}(X',C)\rightarrow0$
is exact. If for any object $C\in\mathcal{C}$, there exists a right
$\mathcal{X}$-approximation $f:X\rightarrow C$, then $\mathcal{X}$
is called a contravariantly finite subcategory of $\mathcal{C}$.
Dually we have the notions of left $\mathcal{X}$-approximation and
covariantly finite subcategory. $\mathcal{X}$ is called functorially
finite if $\mathcal{X}$ is contravariantly finite and covariantly
finite.

Let $\mathcal{C}$ be a category. A pseudokernel of a morphism $v:
V\rightarrow W$ in $\mathcal{C}$ is a morphism $u: U\rightarrow V$
such that $vu=0$ and if $u':U'\rightarrow V$ is a morphism such that
$vu'=0$, there exists $f: U'\rightarrow U$ such that $u'=uf$.
Pseudocokernels are defined dually.

Let $\mathcal{C}$ be a category. A $\mathcal{C}$-module is a
contravariant $k$-linear functor $F:C\rightarrow$ Mod$k$. Then
$\mathcal{C}$-modules form an abelian category Mod$\mathcal{C}$. By
Yoneda's lemma, representable functors Hom$_{\mathcal{C}}(-,C)$ are
projective objects in Mod$\mathcal{C}$. We denote by mod
$\mathcal{C}$ the subcategory of Mod$\mathcal{C}$ consisting of
finitely presented $\mathcal{C}$-modules. One can easily check that
mod$\mathcal{C}$ is closed under cokernels and extensions in
Mod$\mathcal{C}$. Moreover, mod$\mathcal{C}$ is closed under kernels
in Mod$\mathcal{C}$ if and only if $\mathcal{C}$ has pseudokernels.
In this case, mod$\mathcal{C}$ forms an abelian category (see [10]).
For example, if $\mathcal{C}$ is a contravariantly finite
subcategory of a triangulated category, then mod$\mathcal{C}$ forms
an abelian category.

Let $\mathcal{C}$ be an additive category and $\mathcal{B}$ be a
subcategory of $\mathcal{C}$. For any two objects
$X,Y\in\mathcal{C}$, denote by $\mathcal{B}(X,Y)$ the additive
subgroup of Hom$_{\mathcal{C}}(X,Y)$ such that for any morphism
$f\in\mathcal{B}(X,Y)$, $f$ factors through some object in
$\mathcal{B}$. We denote by $\mathcal{C}/\mathcal{B}$ the quotient
category whose objects are objects of $\mathcal{C}$ and whose
morphisms are elements of Hom$_{\mathcal{C}}(M,N)/\mathcal{B}(M,N)$.
The projection functor
$\pi:\mathcal{C}\rightarrow\mathcal{C}/\mathcal{B}$ is an additive
functor satisfying $\pi(\mathcal{B})=0$, and for any additive
functor $F:\mathcal{C}\rightarrow\mathcal{D}$ satisfying
$F(\mathcal{B})=0$, there exists a unique additive functor
$G:\mathcal{C}/\mathcal{B}\rightarrow\mathcal{D}$ such that
$F=G\pi$. We have the following two easy and useful facts.

\begin{lem}
Let $F: \mathcal{C}\rightarrow \mathcal{D}$ be an additive functor.
If $F$ is full and dense, and there exists a subcategory
$\mathcal{B}$ of $\mathcal{C}$ such that any morphism
$f:X\rightarrow Y$ in $\mathcal{C}$ with $F(f)=0$ factors through
some object in $\mathcal{B}$, then $F$ induces an equivalence
$\mathcal{C}/\mathcal{B}\cong\mathcal{D}$.
\end{lem}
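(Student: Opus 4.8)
The plan is to construct the additive functor $G:\mathcal{C}/\mathcal{B}\rightarrow\mathcal{D}$ using the universal property of the quotient, and then to check it is fully faithful and dense. First I would observe that since $F$ kills every morphism factoring through $\mathcal{B}$, in particular $F(\mathcal{B})=0$ (an identity morphism of an object of $\mathcal{B}$ factors through that object), so the universal property of the projection $\pi:\mathcal{C}\rightarrow\mathcal{C}/\mathcal{B}$ stated just above the lemma yields a unique additive functor $G$ with $F=G\pi$. On objects $G$ agrees with $F$, and on a morphism $\bar f\in\hom_{\mathcal{C}}(X,Y)/\mathcal{B}(X,Y)$ we have $G(\bar f)=F(f)$; this is well defined precisely because any two representatives differ by an element of $\mathcal{B}(X,Y)$, which $F$ sends to $0$.

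Next I would verify the three properties. Density of $G$ is immediate from density of $F$, since $G$ and $F$ have the same action on objects and $\pi$ is the identity on objects. For fullness: given a morphism $g:F(X)\rightarrow F(Y)$ in $\mathcal{D}$, fullness of $F$ gives $f:X\rightarrow Y$ in $\mathcal{C}$ with $F(f)=g$, hence $G(\bar f)=g$. For faithfulness: suppose $G(\bar f)=0$, i.e.\ $F(f)=0$; then by hypothesis $f$ factors through some object of $\mathcal{B}$, so $f\in\mathcal{B}(X,Y)$, which means $\bar f=0$ in $\mathcal{C}/\mathcal{B}$. Thus $G$ is full, faithful and dense, hence an equivalence.

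There is no serious obstacle here; the statement is essentially a repackaging of the universal property together with the standard fact that a full, faithful, dense functor is an equivalence. The only point requiring a little care is the well-definedness of $G$ on morphisms, and that is exactly where the hypothesis "every $f$ with $F(f)=0$ factors through $\mathcal{B}$" is used — in fact it is used twice, once (trivially) to get $F(\mathcal{B})=0$ so that $G$ exists at all, and once (essentially) to get faithfulness of $G$. I would also remark that one does not even need to invoke the general "full + faithful + dense $\Rightarrow$ equivalence" theorem as a black box: having $G$ full and faithful one can pick, for each object $D\in\mathcal{D}$, an object $C$ with $F(C)\cong D$ and transport along these isomorphisms to build a quasi-inverse explicitly, but citing the standard fact keeps the proof short.
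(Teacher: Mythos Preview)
The paper states Lemma 2.1 without proof, so there is no argument to compare against; your outline is the standard one, and the density, fullness, and faithfulness steps are correct. There is, however, a gap at the very first step. You assert ``since $F$ kills every morphism factoring through $\mathcal{B}$, in particular $F(\mathcal{B})=0$'' and later say the stated hypothesis is used ``(trivially) to get $F(\mathcal{B})=0$''. But the hypothesis runs the \emph{other} way: it says $F(f)=0\Rightarrow f$ factors through $\mathcal{B}$, not the converse. Nothing in the stated hypotheses gives $F(\mathcal{B})=0$, so the universal property does not apply and $G$ need not be well defined. In fact the lemma is false as literally stated: take $\mathcal{C}=\mathcal{D}$, $F$ the identity functor, and $\mathcal{B}=\mathcal{C}$; every $f$ with $F(f)=0$ (namely $f=0$) trivially factors through an object of $\mathcal{B}$, yet $\mathcal{C}/\mathcal{C}$ has all hom-sets zero and is not equivalent to $\mathcal{C}$ in general.

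This is really an omission in the lemma's statement rather than a flaw in your strategy: the missing hypothesis is $F(\mathcal{B})=0$, and in every use of the lemma in the paper it holds for independent reasons (e.g.\ in Theorem~3.6 one takes $F(X)=\hom_{\mathcal{C}}(-,X)|_{\mathcal{M}}$ with $\mathcal{B}=\Sigma\mathcal{M}$, and rigidity of $\mathcal{M}$ gives $F(\Sigma\mathcal{M})=0$). Once $F(\mathcal{B})=0$ is added to the hypotheses, your proof goes through verbatim.
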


\begin{lem}
Let $\mathcal{A}$ be an additive category,  $\mathcal{B}$ and
$\mathcal{C}$ be two subcategories of $\mathcal{A}$ with
$\mathcal{C}\subset\mathcal{B}$. Then there exists an equivalence of
categories
$(\mathcal{A}/\mathcal{C})/(\mathcal{B}/\mathcal{C})\cong\mathcal{A}/\mathcal{B}$.
\end{lem}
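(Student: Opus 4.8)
The plan is to realise the desired equivalence through a single functor and then invoke Lemma 2.1. Write $\pi_{\mathcal C}\colon\mathcal A\to\mathcal A/\mathcal C$ and $\rho\colon\mathcal A/\mathcal C\to(\mathcal A/\mathcal C)/(\mathcal B/\mathcal C)$ for the two canonical projection functors and set $F=\rho\pi_{\mathcal C}$. First I would record the small preliminary point that $\mathcal B/\mathcal C$ is genuinely a subcategory of $\mathcal A/\mathcal C$ in the sense of Section 2: since $\mathcal B$ is a full subcategory of $\mathcal A$ and $\mathcal C\subseteq\mathcal B$, for objects $M,N\in\mathcal B$ the subgroup $\mathcal C(M,N)$ of maps factoring through $\mathcal C$ is the same whether computed in $\mathcal B$ or in $\mathcal A$, so $\hom_{\mathcal B/\mathcal C}(M,N)=\hom_{\mathcal A/\mathcal C}(M,N)$, and $\mathcal B/\mathcal C$ is additive and closed under summands. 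Next, both $\pi_{\mathcal C}$ and $\rho$ are additive, full and dense --- they are the identity on objects and surjective on Hom-groups by construction --- hence so is $F$.

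By Lemma 2.1 it then suffices to show that any morphism $f\colon X\to Y$ in $\mathcal A$ with $F(f)=0$ factors through an object of $\mathcal B$; once this is done, Lemma 2.1 produces an equivalence $\mathcal A/\mathcal B\cong(\mathcal A/\mathcal C)/(\mathcal B/\mathcal C)$, which is the assertion. So suppose $F(f)=0$, i.e. $\rho(\pi_{\mathcal C}(f))=0$. By the definition of the quotient $(\mathcal A/\mathcal C)/(\mathcal B/\mathcal C)$ this means $\pi_{\mathcal C}(f)\in(\mathcal B/\mathcal C)(X,Y)$, so $\pi_{\mathcal C}(f)=\bar h\bar g$ in $\mathcal A/\mathcal C$ for some $\bar g\colon X\to B$ and $\bar h\colon B\to Y$ with $B\in\mathcal B$. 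I would lift $\bar g,\bar h$ to morphisms $g\colon X\to B$, $h\colon B\to Y$ in $\mathcal A$; then $\pi_{\mathcal C}(f-hg)=0$, so $f-hg$ factors through some $C\in\mathcal C$, say $f-hg=h'g'$ with $g'\colon X\to C$ and $h'\colon C\to Y$. Combining,
\[
f=hg+h'g'=\begin{pmatrix}h&h'\end{pmatrix}\begin{pmatrix}g\\g'\end{pmatrix}\colon X\longrightarrow B\oplus C\longrightarrow Y,
\]
and since $\mathcal C\subseteq\mathcal B$ and $\mathcal B$ is additive, $B\oplus C\in\mathcal B$; thus $f$ factors through an object of $\mathcal B$, as required.

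I do not expect a genuine obstacle here. The one step needing a little care is the last one: the factorisation of $\pi_{\mathcal C}(f)$ through $B\in\mathcal B$ lives in the quotient $\mathcal A/\mathcal C$, not in $\mathcal A$, so one must lift it and pay the price of the correction term $f-hg$, which the defining relations of $\mathcal A/\mathcal C$ force to factor through $\mathcal C$; the hypothesis $\mathcal C\subseteq\mathcal B$ is exactly what lets these two factorisations be amalgamated into a single one through $B\oplus C\in\mathcal B$. (An alternative, slightly longer route avoids Lemma 2.1 and instead builds functors both ways directly from the universal property of quotient categories recalled in Section 2, verifying they are mutually inverse by the uniqueness clauses; I would prefer the argument above.)
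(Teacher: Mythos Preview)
Your proof is correct and, like the paper, relies on Lemma~2.1, but you apply it to the functor going the \emph{other} way. The paper uses the universal property of $\mathcal A/\mathcal C$ to produce $F\colon\mathcal A/\mathcal C\to\mathcal A/\mathcal B$ with $F\pi_{\mathcal C}=\pi_{\mathcal B}$, and then invokes Lemma~2.1 on this $F$: if $F(\pi_{\mathcal C}(f))=\pi_{\mathcal B}(f)=0$ then $f$ factors through some $B\in\mathcal B$ in $\mathcal A$, whence $\pi_{\mathcal C}(f)$ factors through the same $B$ in $\mathcal A/\mathcal C$, and one is done. By contrast, you build $F=\rho\pi_{\mathcal C}\colon\mathcal A\to(\mathcal A/\mathcal C)/(\mathcal B/\mathcal C)$ and must then lift a factorisation \emph{from} the quotient $\mathcal A/\mathcal C$ back to $\mathcal A$, which forces the extra correction-term argument and the amalgamation into $B\oplus C$. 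Both routes are perfectly valid; the paper's choice simply makes the factorisation step a tautology, while yours requires the small lifting computation you carried out. Your preliminary check that $\mathcal B/\mathcal C$ is a genuine subcategory of $\mathcal A/\mathcal C$ is a nice point the paper leaves implicit.
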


\begin{proof}
Let
$\pi_{\mathcal{B}}:\mathcal{A}\rightarrow{\mathcal{A}/\mathcal{B}}$
and
$\pi_{\mathcal{C}}:\mathcal{A}\rightarrow{\mathcal{A}/\mathcal{C}}$
be the projection functors.
%For any morphism $g$ in $\mathcal{A}$,
%we denote $[g]=\pi_{\mathcal{C}}(g)$.
Note that $\mathcal{C}\subset\mathcal{B}$, we have
$\pi_{\mathcal{B}}(\mathcal{C})=0$, thus there exists a unique
functor
$F:\mathcal{A}/\mathcal{C}\rightarrow\mathcal{A}/\mathcal{B}$ such
that $F\pi_{\mathcal{C}}=\pi_{\mathcal{B}}$. Since
$\pi_{\mathcal{B}}$ is full and dense, $F$ is full and dense too.

Let $f:X\rightarrow Y$ be a morphism in $\mathcal{A}$ such that
$F(\pi_{\mathcal{C}}(f))=0$, that is $\pi_{\mathcal{B}}(f)=0$. Then
$f$ factors through some object in $\mathcal{B}$, thus
$\pi_{\mathcal{C}}(f)$ factors through some object in
$\mathcal{B}/\mathcal{C}$. According to Lemma 2.1, we
 have
an equivalence of categories
$(\mathcal{A}/\mathcal{C})/(\mathcal{B}/\mathcal{C})\xrightarrow{\sim}\mathcal{A}/\mathcal{B}$.
\end{proof}

\section{Subquotient categories of right triangulated categories}

Firstly, we recall some basics on right triangulated categories from
[8].

\begin{defn}
A right triangulated category is a triple
$(\mathcal{C},\Sigma,\rhd)$, or simply $\mathcal{C}$, where:

(a)\quad $\mathcal{C}$ is an additive category.

(b)\quad $\Sigma:\mathcal{C}\rightarrow\mathcal{C}$ is an additive
functor, called the shift functor of $\mathcal{C}$.

(c)\quad $\rhd$ is a class of sequences of three morphisms of the
form $U\xrightarrow{u}V\xrightarrow{v}W\xrightarrow{w}\Sigma U$,
called right triangles, and satisfying the following axioms:

(RTR0)\quad If
$U\xrightarrow{u}V\xrightarrow{v}W\xrightarrow{w}\Sigma U$ is a
right triangle, and
$U'\xrightarrow{u'}V'\xrightarrow{v'}W'\xrightarrow{w'}\Sigma U'$ is
a sequence of morphisms such that there exists a commutative diagram
in $\mathcal{C}$
$$\xymatrix{
U\ar[r]^{u}\ar[d]^{f} & V\ar[r]^{v}\ar[d]^{g} &
W\ar[r]^{w}\ar[d]^{h} & \Sigma U\ar[d]^{\Sigma f} \\
U'\ar[r]^{u'} & V'\ar[r]^{v'} & W'\ar[r]^{w'} & \Sigma U', }
$$
where $f,g,h$ are isomorphisms, then
$U'\xrightarrow{u'}V'\xrightarrow{v'}W'\xrightarrow{w'}\Sigma U'$ is
also a right triangle.

(RTR1)\quad For any $U\in\mathcal{C}$, the sequence
$0\xrightarrow{}U\xrightarrow{1_{U}}U\xrightarrow{}0$ is a right
triangle. And for any morphism $u:U\rightarrow V$ in $\mathcal{C}$,
there exists a right triangle
$U\xrightarrow{u}V\xrightarrow{v}W\xrightarrow{w}\Sigma U$.

(RTR2)\quad If
$U\xrightarrow{u}V\xrightarrow{v}W\xrightarrow{w}\Sigma U$ is a
right triangle, then so is $V\xrightarrow{v}W\xrightarrow{w}\Sigma
U\xrightarrow{-\sum u}\Sigma V$.

(RTR3)\quad For any two right triangles
$U\xrightarrow{u}V\xrightarrow{v}W\xrightarrow{w}\Sigma U$ and
$U'\xrightarrow{u'}V'\xrightarrow{v'}W'\xrightarrow{w'}\Sigma U'$,
and any two morphisms $f:U\rightarrow U'$, $g:V\rightarrow V'$ such
that $gu=u'f$, there exists $h:W\rightarrow W'$ such that the
following  diagram is commutative
$$\xymatrix{
U\ar[r]^{u}\ar[d]^{f} & V\ar[r]^{v}\ar[d]^{g} &
W\ar[r]^{w}\ar@{-->}[d]^{h} & \Sigma U\ar[d]^{\Sigma f} \\
U'\ar[r]^{u'} & V'\ar[r]^{v'} & W'\ar[r]^{w'} & \Sigma U'. }
$$

(RTR4)\quad For any two right triangles
$U\xrightarrow{u}V\xrightarrow{v}W\xrightarrow{w}\Sigma U$ and
$U'\xrightarrow{u'}U\xrightarrow{v'}W'\xrightarrow{w'}\Sigma U'$,
there exists a commutative diagram
$$\xymatrix{
U'\ar[r]^{u'}\ar@{=}[d] & U\ar[r]^{v'}\ar[d]^{u} & W'\ar[r]^{w'}\ar[d]^{f} & \Sigma U\ar@{=}[d] \\
U'\ar[r]^{u\cdot u'}    & V\ar[r]^{p}\ar[d]^{v}  & V'\ar[r]^{q}\ar[d]^{g}  & \Sigma U'          \\
                        & W\ar@{=}[r]\ar[d]^{w}  & W\ar[d]^{\Sigma v'\cdot w}                    \\
                        & \Sigma U\ar[r]^{\Sigma v'}         & \Sigma W',
}
$$
where the second row and the third column are right triangles.
\end{defn}

\begin{example}
A triangulated category $\mathcal{C}$ is a  right triangulated
category, where the shift functor $\Sigma$ is an equivalence. In
this case, right triangles in $\mathcal{C}$ are called triangles.
\end{example}

\begin{example} (cf.[7,11])
Let $\mathcal{B}$ be an exact category which contains enough
injectives. The subcategory of injectives is denoted by
$\mathcal{I}$. Then the quotient category
$\overline{\mathcal{B}}=\mathcal{B}/\mathcal{I}$ is a right
triangulated category. For any morphism
$f\in\mbox{Hom}_\mathcal{B}(X,Y)$, we denote its image in
Hom$_{\overline{\mathcal{B}}}(X,Y)$ by $\overline{f}$. Let us recall
the definitions of the shift functor $\Sigma$ and of the
distinguished right triangles. For any $X\in\mathcal{B}$, there is a
short exact sequence $0\rightarrow X \xrightarrow{i_X}
I_X\xrightarrow {d_X}C_X\rightarrow 0$ with $I_X\in\mathcal{I}$. For
any morphism $f:X\rightarrow Y$, we have the following commutative
diagram with exact rows
$$\xymatrix{
0\ar[r] & X\ar[d]_{f}\ar[r]^{i_{X}} &
I_{X}\ar[r]^{d_{X}}\ar[d]_{i_{f}} & C_{X}\ar[r]\ar[d]_{c_{f}}
& 0 \\
0\ar[r] & Y\ar[r]^{i_{Y}} & I_{Y}\ar[r]^{d_{Y}} & C_{Y}\ar[r] & 0,}
$$
where $I_{X},I_{Y}\in\mathcal{I}$. Define $\Sigma(X)=C_X$ and
$\Sigma\overline{f}=\overline{c_{f}}$. We can show that the functor
$\Sigma$ is well defined.
%Then we can easily prove that
%$\Sigma:\overline{\mathcal{B}}\rightarrow\overline{\mathcal{B}}$ is
%an additive functor and
%$\overline{\Sigma\mathcal{M}}=\Sigma\overline{\mathcal{M}}$.
For any morphism $f:X\rightarrow Y$, we have the following
commutative diagram with exact rows
$$\xymatrix{
0\ar[r] & X\ar[d]_{f}\ar[r]^{i_{X}} &
I_{X}\ar[r]^{d_{X}}\ar[d]_{i_{f}} & C_X\ar[r]\ar@{=}[d]
& 0 \\
0\ar[r] & Y\ar[r]^{g} & Z\ar[r]^{h} & C_X\ar[r] & 0,}
$$
where $Z$ is the pushout of $f$ and $i_X$. Then
$X\xrightarrow{\overline{f}}Y\xrightarrow{\overline{g}}Z\xrightarrow{\overline{h}}\Sigma
X$, or equivalently $X\xrightarrow{\tiny\left(
                                       \begin{array}{c}
                                         \overline{f} \\
                                         \overline{i_X} \\
                                       \end{array}
                                     \right)}Y\oplus
I_X\xrightarrow{(\overline{g},-\overline{i_f})}Z\xrightarrow{\overline{h}}\Sigma
X$ is a distinguished  right triangle. In this case, there is a
short exact sequence $0\rightarrow X\xrightarrow{\tiny\left(
                                       \begin{array}{c}
                                         f \\
                                         i_X \\
                                       \end{array}
                                     \right)
}Y\oplus I_X\xrightarrow{(g,-i_f)}Z\rightarrow0$. And we have the
following commutative diagram of short exact sequences
$$\xymatrix{
0\ar[r] & X\ar@{=}[d]\ar[rr]^{\tiny\left(
                      \begin{array}{c}
                        f \\
                        i_X \\
                      \end{array}
                    \right)}& & Y\oplus I_{X}\ar[d]_{(0,1)}\ar[rr]^{(g,-i_{f})}
& & Z\ar[d]_{-h}\ar[r] & 0\\
0\ar[r] & X\ar[rr]^{i_X}& & I_X\ar[rr]^{d_X} & & \Sigma X\ar[r] & 0.
}
$$
So a distinguished
right triangle in $\overline{\mathcal{B}}$ give rise to a short
exact sequence in $\mathcal{B}$. On the other hand, Let
$0\rightarrow X\xrightarrow{f}Y\xrightarrow {g}Z\rightarrow0$ be a
short exact sequence in $\mathcal{B}$, then we have the following
commutative diagram with exact rows
$$\xymatrix{
0\ar[r] & X\ar@{=}[d]\ar[r]^{f} & Y\ar[r]^{g}\ar[d]_{i_{Y}} &
Z\ar[r]\ar[d]^{h}
& 0 \\
0\ar[r] & X\ar[r]^{i_Y} & I_Y\ar[r]^{p} & \Sigma X\ar[r] & 0,}$$
where $I_Y\in\mathcal{I}$, and
$X\xrightarrow{\overline{f}}Y\xrightarrow{\overline{g}}Z\xrightarrow{-\overline{h}}\Sigma
X$ is a right triangle in $\overline{\mathcal{B}}$ [11]. Thus, a
short exact sequence in $\mathcal{B}$ give rise to a right triangle
in $\overline{\mathcal{B}}$.
\end{example}
%Denote by $\rhd$ the class of sequences of
%three morphisms which are equivalent to the standard right
%triangles. Hence $(\overline{\mathcal{B}},\Sigma,\rhd)$ is a right
%triangulated category.

%Let
%$U\xrightarrow{\overline{u}}V\xrightarrow{\overline{v}}W\xrightarrow{\overline{w}}\Sigma
%U$ be a standard right triangle in $\overline{\mathcal{B}}$, we have
%the following commutative diagram with exact rows in $\mathcal{B}$:
%$$\xymatrix{
%0\ar[r] & U\ar@{=}[d]\ar[r]^{u'} & V\ar[r]^{v'}\ar[d] &
%W\ar[r]\ar[d]^{w'}
%& 0 \\
%0\ar[r] & U\ar[r] & I_{U}\ar[r] & \Sigma U\ar[r] & 0,}
%$$
%where
%$\overline{u}=\overline{u'}$,$\overline{v}=\overline{v'}$,$\overline{w}=\overline{w'}$.

The following lemma can be found in [7].

\begin{lem}
Let $\mathcal{C}$ be a right triangulated category, and
$U\xrightarrow{u}V\xrightarrow{v}W\xrightarrow{w}\Sigma U$ be a
right triangle.

(a) $v$ is a pseudocokernel of $u$, and $w$ is a pseudocokernel of
$v$.

(b) If $\Sigma$ is fully faithful, then $u$ is a pseudokernel of
$v$, and $v$ is a pseudokernel of $w$.

%(c) For any object $X$ of $\mathcal{C}$, there exists an exact
%sequence $$\cdots\rightarrow\text{Hom}_{\mathcal{C}}(\Sigma
%V,X)\xrightarrow{-(\Sigma u)^{\ast}}\text{Hom}_{\mathcal{C}}(\Sigma
%U,X)\xrightarrow{w^{\ast}}$$$$
%\text{Hom}_{\mathcal{C}}(W,X)\xrightarrow{v^{\ast}}\text{Hom}_{\mathcal{C}}(V,X)\xrightarrow{u^{\ast}}
%\text{Hom}_{\mathcal{C}}(U,X).$$
\end{lem}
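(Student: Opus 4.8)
The plan is to derive everything from the rotation axiom (RTR2) and the morphism axiom (RTR3), reducing the pseudocokernel statements in (a) and the pseudokernel statements in (b) to one basic move: comparing the given right triangle with a \emph{split} right triangle via (RTR3).

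For part (a) I would argue as follows. First record that $vu=0$: applying (RTR1) and then (RTR2) to the split triangle $0\to U\xrightarrow{1_U}U\to 0$ shows that $U\xrightarrow{1_U}U\to 0\to\Sigma U$ is a right triangle, and feeding this together with the given triangle into (RTR3) with vertical maps $1_U$ and $u$ produces a morphism $0\to W$ through which $vu$ factors, so $vu=0$. For the universal property, let $t\colon V\to X$ satisfy $tu=0$. Take the split right triangle $0\to X\xrightarrow{1_X}X\to 0$ (via (RTR1)) and apply (RTR3) to the given triangle and this one, with vertical maps $0\colon U\to 0$ and $t\colon V\to X$; the required commutativity is precisely $tu=0$, and the morphism $h\colon W\to X$ that (RTR3) outputs satisfies $hv=t$. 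Hence $v$ is a pseudocokernel of $u$. For ``$w$ is a pseudocokernel of $v$'', I would just apply what has been proven to the right triangle $V\xrightarrow{v}W\xrightarrow{w}\Sigma U\xrightarrow{-\Sigma u}\Sigma V$, which is a right triangle by (RTR2).

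For part (b) the new ingredient is that $\Sigma$ is fully faithful, and this genuinely enters — the statement is not a formal dual of (a). To show $u$ is a pseudokernel of $v$, take $a\colon A\to V$ with $va=0$, and complete $a$ to a right triangle $A\xrightarrow{a}V\xrightarrow{b}B\xrightarrow{c}\Sigma A$ by (RTR1). By part (a), $b$ is a pseudocokernel of $a$, so $v=gb$ for some $g\colon B\to W$. Rotate both triangles by (RTR2) to $V\xrightarrow{b}B\xrightarrow{c}\Sigma A\xrightarrow{-\Sigma a}\Sigma V$ and $V\xrightarrow{v}W\xrightarrow{w}\Sigma U\xrightarrow{-\Sigma u}\Sigma V$, and apply (RTR3) to these with vertical maps $1_V$ and $g$ (the left square commutes since $gb=v$). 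This produces $h\colon\Sigma A\to\Sigma U$ making the diagram commute; the rightmost square gives $(-\Sigma u)h=-\Sigma a$, i.e. $\Sigma u\cdot h=\Sigma a$. Since $\Sigma$ is full, $h=\Sigma h_0$ for some $h_0\colon A\to U$, whence $\Sigma(uh_0)=\Sigma u\cdot\Sigma h_0=\Sigma a$, and faithfulness of $\Sigma$ gives $uh_0=a$. Thus $a$ factors through $u$. Finally ``$v$ is a pseudokernel of $w$'' follows by applying this to the right triangle $V\xrightarrow{v}W\xrightarrow{w}\Sigma U\xrightarrow{-\Sigma u}\Sigma V$.

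The only nonroutine point — the main obstacle — is the pseudokernel claim in (b): one has to see that the correct move is to resolve $a$ by a right triangle, transport the factorization $v=gb$ along it, rotate \emph{and only then} compare triangles via (RTR3), so that the connecting morphism it yields lives as a map $\Sigma A\to\Sigma U$, exactly where fully-faithfulness of $\Sigma$ can be used to descend it to the desired $A\to U$. Everything else is bookkeeping with (RTR1)--(RTR3).
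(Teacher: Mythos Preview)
Your argument is correct. The paper does not give its own proof of this lemma; it simply cites it from reference~[7], so there is no in-paper proof to compare against directly.

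One remark on part~(b): your route through an auxiliary right triangle on $a$ works, but it can be shortened. The paper itself, in the proof of Lemma~3.7, carries out exactly this kind of pseudokernel argument in a special case and does it more directly: rotate the split triangle twice to obtain $A\to 0\to \Sigma A\xrightarrow{-\Sigma 1_A}\Sigma A$, rotate the given triangle once to $V\xrightarrow{v}W\xrightarrow{w}\Sigma U\xrightarrow{-\Sigma u}\Sigma V$, and apply (RTR3) with vertical maps $a\colon A\to V$ and $0\colon 0\to W$; the left square commutes precisely because $va=0$, and the output $h\colon \Sigma A\to\Sigma U$ already satisfies $(\Sigma u)h=\Sigma a$, after which fullness and faithfulness of $\Sigma$ finish as in your proof. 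This avoids building the auxiliary triangle on $a$ and the intermediate factorization $v=gb$, though your longer path is perfectly valid.
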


%\begin{proof}
%By (RTR3), there exists a morphism $h:W\rightarrow W'$ making the
%diagram above commutative. $\forall X\in\mathcal{C}$, by Proposition
%2.1.3 and Five Lemma, we have a isomorphism
%$h^{\ast}$:Hom$_{\mathcal{C}}(W',X)\rightarrow$Hom$_{\mathcal{C}}(W,X)$.
%By Yoneda's Lemma, $h$ is a isomorphism.
%\end{proof}

\begin{defn}
Let $\mathcal{C}$ be a right triangulated category. A subcategory
$\mathcal{M}$ of $\mathcal{C}$ is called a rigid subcategory if
Hom$_{\mathcal{C}}(\mathcal{M},\Sigma\mathcal{M})=0$.
\end{defn}

Let $\mathcal{M}$ be a rigid subcategory of $\mathcal{C}$. Denote by
$\mathcal{M}\ast\Sigma\mathcal{M}$ the subcategory of $\mathcal{C}$
consisting of all such $X\in\mathcal{C}$ with right triangles
$M_{0}\rightarrow M_{1}\rightarrow X\rightarrow\Sigma M_{0},$ where
$M_{0},M_{1}\in\mathcal{M}$.
%Since $\mathcal{M}$ is rigid,
%$\mathcal{M}\ast\Sigma\mathcal{M}$ is closed under direct summands.

Now we can state the main theorem of this section.

%Since $\mathcal{M}$ is a rigid subcategory,
%$F(\Sigma\mathcal{M})=0$. Hence by the universal property of the
%projection functor
%$\pi:\mathcal{M}\ast\Sigma\mathcal{M}\rightarrow\mathcal{M}\ast\Sigma\mathcal{M}\big{/}\Sigma\mathcal{M}$,
%$F$ can induce a functor
%$\overline{F}:\mathcal{M}\ast\Sigma\mathcal{M}\big{/}\Sigma\mathcal{M}\rightarrow
%\text{Mod }\mathcal{M}$.

\begin{thm}
Let $\mathcal{C}$ be a right triangulated category, and
$\mathcal{M}$ be a rigid subcategory of $\mathcal{C}$ satisfying:

(RC1)\quad$\Sigma$ is fully faithful when it is restricted to
$\mathcal{M}$.

(RC2)\quad For any two objects $M_{0},M_{1}\in\mathcal{M}$, if
$M_{0}\xrightarrow{f}M_{1}\xrightarrow{g}X\xrightarrow{h}\Sigma
M_{0}$ is a right triangle in $\mathcal{C}$, then $g$ is a right
$\mathcal{M}$-approximation of $X$.

Then there exists an equivalence of categories
$\mathcal{M}\ast\Sigma\mathcal{M}\big{/}\Sigma\mathcal{M}\cong\mbox{mod}\mathcal{M}$.
\end{thm}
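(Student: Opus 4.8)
The plan is to mimic the strategy of Iyama--Yoshino's Theorem 3.5 in this one-sided setting, constructing an explicit functor from $\mathcal{M}\ast\Sigma\mathcal{M}$ to $\mod\mathcal{M}$ and then applying Lemma 2.1. The functor $F:\mathcal{M}\ast\Sigma\mathcal{M}\rightarrow\mod\mathcal{M}$ should send $X$ to the restriction to $\mathcal{M}$ of the representable functor $\hom_{\mathcal{C}}(-,X)|_{\mathcal{M}}$; one first checks this lands in $\mod\mathcal{M}$, using (RC2). Indeed, given $X$ with a right triangle $M_{0}\xrightarrow{f}M_{1}\xrightarrow{g}X\xrightarrow{h}\Sigma M_{0}$ with $M_{i}\in\mathcal{M}$, applying $\hom_{\mathcal{C}}(M,-)$ for $M\in\mathcal{M}$ gives an exact sequence $\hom_{\mathcal{C}}(M,M_{0})\rightarrow\hom_{\mathcal{C}}(M,M_{1})\rightarrow\hom_{\mathcal{C}}(M,X)$, where exactness at the right-hand term is precisely the statement that $g$ is a right $\mathcal{M}$-approximation of $X$, i.e.\ (RC2), and the sequence extends by zero on the right since $\hom_{\mathcal{C}}(M,\Sigma M_{0})=0$ by rigidity. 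Hence $F(X)=\operatorname{coker}\bigl(\hom_{\mathcal{C}}(-,M_{0})|_{\mathcal{M}}\xrightarrow{f_{\ast}}\hom_{\mathcal{C}}(-,M_{1})|_{\mathcal{M}}\bigr)$ is finitely presented.

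Next I would verify that $F$ is dense and full. Density is immediate: every finitely presented $\mathcal{M}$-module is by definition the cokernel of a map $\hom_{\mathcal{C}}(-,M_{0})|_{\mathcal{M}}\xrightarrow{f_{\ast}}\hom_{\mathcal{C}}(-,M_{1})|_{\mathcal{M}}$ induced by some $f:M_{0}\rightarrow M_{1}$ in $\mathcal{M}$ (Yoneda), and by (RTR1) we may complete $f$ to a right triangle $M_{0}\xrightarrow{f}M_{1}\xrightarrow{g}X\xrightarrow{h}\Sigma M_{0}$; then $X\in\mathcal{M}\ast\Sigma\mathcal{M}$ and $F(X)$ is the prescribed module. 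For fullness, given $X,Y\in\mathcal{M}\ast\Sigma\mathcal{M}$ with chosen presenting triangles and a morphism of $\mathcal{M}$-modules $F(X)\rightarrow F(Y)$, I would lift it along the projective presentations to a commutative square between $\hom_{\mathcal{C}}(-,M_{0})|_{\mathcal{M}}\to\hom_{\mathcal{C}}(-,M_{1})|_{\mathcal{M}}$ and its $Y$-counterpart, realize the vertical maps by morphisms $M_{0}\to M_{0}'$, $M_{1}\to M_{1}'$ in $\mathcal{M}$ via Yoneda, use (RTR3) to produce a morphism $X\to Y$ filling in the map of right triangles, and check it induces the given module map.

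The main obstacle will be the kernel condition for Lemma 2.1: showing that $F(\varphi)=0$ for $\varphi:X\rightarrow Y$ forces $\varphi$ to factor through an object of $\Sigma\mathcal{M}$. Here is where (RC1) and Lemma 3.4(b) enter decisively --- since $\Sigma$ is fully faithful on $\mathcal{M}$, in the triangle $M_{0}\xrightarrow{f}M_{1}\xrightarrow{g}X\xrightarrow{h}\Sigma M_{0}$ the map $g$ is a pseudokernel of $h$ (Lemma 3.4(b) applied suitably, together with (RTR2) rotating to bring $\Sigma M_{0}$ into play). Concretely: if $F(\varphi)=0$ then $\varphi g:M_{1}\rightarrow Y$ becomes zero in $F(Y)$, so $\varphi g$ factors through $g':M_{1}'\to Y$, but more to the point $\varphi g=0$ after composing into the approximation data, which lets us push $\varphi$ down to a map $\Sigma M_{0}\to Y$ through $h$; chasing this through and using that $\Sigma M_{0}\in\Sigma\mathcal{M}$ gives the factorization. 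I expect the delicate point is assembling the rotations of the triangle and the pseudokernel property of Lemma 3.4 in exactly the right configuration so that $F(\varphi)=0$ yields $\varphi = (\text{something})\circ h$ with target in $\Sigma\mathcal{M}$; this is the step where the right-triangulated (as opposed to triangulated) hypotheses must be used carefully, since one cannot rotate freely in both directions. Once the factorization is established, Lemma 2.1 immediately yields $\mathcal{M}\ast\Sigma\mathcal{M}\big{/}\Sigma\mathcal{M}\cong\mod\mathcal{M}$.
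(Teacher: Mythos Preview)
Your overall architecture matches the paper's exactly: define $F(X)=\hom_{\mathcal{C}}(-,X)|_{\mathcal{M}}$, show it lands in $\mbox{mod}\,\mathcal{M}$, prove density and fullness, then identify the kernel via Lemma~2.1. But you have mislocated where the hypothesis (RC1) is actually needed, and this creates a genuine gap in one place and unnecessary confusion in another.

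First, the gap. You write that ``applying $\hom_{\mathcal{C}}(M,-)$ \dots\ gives an exact sequence $\hom_{\mathcal{C}}(M,M_{0})\rightarrow\hom_{\mathcal{C}}(M,M_{1})\rightarrow\hom_{\mathcal{C}}(M,X)$'' as if this were automatic. It is not: in a right triangulated category the covariant Hom long exact sequence need not be exact at the middle term unless $\Sigma$ is fully faithful on all of $\mathcal{C}$ (this is exactly Lemma~3.4(b), whose hypothesis you do not have). What you need is that $f$ is a pseudokernel of $g$, and this is precisely where (RC1) enters. The paper isolates this as Lemma~3.7: given $t:M\to M_1$ with $gt=0$, rotate to compare with the triangle $M\to 0\to\Sigma M\xrightarrow{-\Sigma 1}\Sigma M$, use (RTR3) to produce $m':\Sigma M\to\Sigma M_0$ with $\Sigma t=(-\Sigma f)m'$, then invoke fullness of $\Sigma|_{\mathcal{M}}$ to write $m'=\Sigma m$ and faithfulness to conclude $t=fm$. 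Without this argument your presentation of $F(X)$ is not justified.

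Second, the kernel step is much simpler than you suggest, and does \emph{not} use (RC1) or Lemma~3.4(b). If $F(t)=0$ for $t:X\to Y$, then evaluating at $M_1$ on the identity gives $tg=0$ directly. Now Lemma~3.4(a) says $h$ is a pseudocokernel of $g$ (this holds in any right triangulated category, no hypothesis on $\Sigma$), so $t$ factors through $h:X\to\Sigma M_0$, and $\Sigma M_0\in\Sigma\mathcal{M}$. Your discussion of ``pushing $\varphi$ down through $h$'' is the right idea, but you have wrapped it in an incorrect invocation of Lemma~3.4(b) and a restricted-full-faithfulness argument that is neither needed nor available here.

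In short: (RC1) belongs in the proof that $\hom_{\mathcal{M}}(-,M_0)\to\hom_{\mathcal{M}}(-,M_1)\to F(X)\to 0$ is exact, not in the kernel identification; the kernel step is a one-line application of the pseudocokernel property.
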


 Before prove the theorem, we prove the lemma as follow.

\begin{lem}
Under the same assumption as in Theorem 3.6, for any right triangle
$M_{0}\xrightarrow{f}M_{1}\xrightarrow{g}X\xrightarrow{h}\Sigma
M_{0}$ where $M_0, M_1\in\mathcal{M}$, there is an exact sequence in
$\text{Mod} \mathcal{M}$
$$\text{Hom}
_{\mathcal{M}}(-,M_{0})\xrightarrow{\text{Hom}_{\mathcal{M}}(-,f)}
\mbox{Hom}_{\mathcal{M}}(-,M_{1})\xrightarrow{\text{Hom}_{\mathcal{C}}(-,g)}
\text{Hom}_{\mathcal{C}}(-,X)|_{\mathcal{M}}\rightarrow0.$$ Thus,
$\text{Hom}_{\mathcal{C}}(-,X)|_{\mathcal{M}}\in\text{mod}\mathcal{M}$.
\end{lem}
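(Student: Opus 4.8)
The plan is to apply the representable functor $\hom_{\mathcal{C}}(-,-)$ to the given right triangle and then restrict everything to $\mathcal{M}$, using the hypotheses (RC1) and (RC2) to get exactness at each of the two spots. First I would invoke Lemma 3.3(a): since $M_0\xrightarrow{f}M_1\xrightarrow{g}X\xrightarrow{h}\Sigma M_0$ is a right triangle, $g$ is a pseudocokernel of $f$, so for every object $C\in\mathcal{C}$ the sequence of abelian groups
$$\hom_{\mathcal{C}}(C,M_0)\xrightarrow{f_\ast}\hom_{\mathcal{C}}(C,M_1)\xrightarrow{g_\ast}\hom_{\mathcal{C}}(C,X)$$
is exact in the middle, i.e. $\Ker g_\ast=\mathrm{Im}\,f_\ast$. (Here exactness in the middle is exactly the pseudocokernel property: $g_\ast f_\ast=0$ since $gf$ factors through a right triangle, wait — more simply $gf=0$ because $w=h$ composed, no: $gf=0$ holds since in any right triangle consecutive composites vanish; and any $\varphi$ with $g\varphi=0$ lifts through $f$ by the pseudocokernel property.) Specializing $C$ to objects of $\mathcal{M}$ and noting $\hom_{\mathcal{C}}(M,M_i)=\hom_{\mathcal{M}}(M,M_i)$ for $M,M_i\in\mathcal{M}$ (full subcategory), this yields exactness of the restricted sequence at the middle term $\hom_{\mathcal{M}}(-,M_1)$ as functors on $\mathcal{M}$.

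Next I would handle surjectivity on the right, i.e. exactness at $\hom_{\mathcal{C}}(-,X)|_{\mathcal{M}}$. This is precisely where (RC2) enters: for $M\in\mathcal{M}$, any morphism $\varphi:M\to X$ must factor through the right $\mathcal{M}$-approximation $g:M_1\to X$ — that is, $\varphi=g\psi$ for some $\psi:M\to M_1$ — which says exactly that $g_\ast:\hom_{\mathcal{M}}(M,M_1)\to\hom_{\mathcal{C}}(M,X)$ is surjective. (One should also remark that $\hom_{\mathcal{C}}(-,X)|_{\mathcal{M}}$ is genuinely an $\mathcal{M}$-module, being the composite of the inclusion $\mathcal{M}\hookrightarrow\mathcal{C}$ with the representable $\mathcal{C}$-module.) Combining the two exactness statements gives the asserted exact sequence in $\mathrm{Mod}\,\mathcal{M}$:
$$\hom_{\mathcal{M}}(-,M_0)\xrightarrow{f_\ast}\hom_{\mathcal{M}}(-,M_1)\xrightarrow{g_\ast}\hom_{\mathcal{C}}(-,X)|_{\mathcal{M}}\to0.$$
Finally, since $\hom_{\mathcal{M}}(-,M_0)$ and $\hom_{\mathcal{M}}(-,M_1)$ are representable, hence projective (and in particular finitely presented) $\mathcal{M}$-modules, this exact sequence exhibits $\hom_{\mathcal{C}}(-,X)|_{\mathcal{M}}$ as the cokernel of a morphism between finitely generated projectives, so it lies in $\mathrm{mod}\,\mathcal{M}$.

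I do not anticipate a serious obstacle here; the lemma is essentially an unwinding of definitions. The only point requiring a little care is the bookkeeping between $\hom_{\mathcal{C}}$ and $\hom_{\mathcal{M}}$: one must keep straight that the middle and left terms can be written with $\hom_{\mathcal{M}}$ because $\mathcal{M}$ is a full subcategory and $M_0,M_1\in\mathcal{M}$, whereas the right term is intrinsically $\hom_{\mathcal{C}}(-,X)|_{\mathcal{M}}$ since $X$ need not lie in $\mathcal{M}$. Note also that hypothesis (RC1) is not actually needed for this lemma — it will be needed later in the proof of Theorem 3.6, for the part involving pseudokernels via Lemma 3.3(b) — so the two inputs here are just Lemma 3.3(a) (pseudocokernel) and (RC2) (approximation).
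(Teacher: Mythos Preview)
There is a genuine gap in your argument for exactness at the middle term. You invoke ``$g$ is a pseudocokernel of $f$'' to conclude that
\[
\hom_{\mathcal{C}}(C,M_0)\xrightarrow{f_\ast}\hom_{\mathcal{C}}(C,M_1)\xrightarrow{g_\ast}\hom_{\mathcal{C}}(C,X)
\]
is exact in the middle, but this is the wrong variance. The pseudocokernel property of $g$ says: any $\psi:M_1\to Z$ with $\psi f=0$ factors through $g$; that gives exactness of the \emph{contravariant} sequence $\hom_{\mathcal{C}}(X,Z)\to\hom_{\mathcal{C}}(M_1,Z)\to\hom_{\mathcal{C}}(M_0,Z)$. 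What you need for the covariant sequence is that $f$ be a \emph{pseudokernel} of $g$, and Lemma~3.4(b) only supplies this when $\Sigma$ is fully faithful on all of $\mathcal{C}$ --- which is not assumed here.

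Accordingly, your assertion that (RC1) is not needed is incorrect; the paper uses (RC1) precisely at this step. Given $t:M\to M_1$ with $gt=0$, one applies (RTR3) to the pair of right triangles
\[
M\to 0\to\Sigma M\xrightarrow{-\Sigma 1_M}\Sigma M
\quad\text{and}\quad
M_1\xrightarrow{g}X\xrightarrow{h}\Sigma M_0\xrightarrow{-\Sigma f}\Sigma M_1
\]
to obtain $m':\Sigma M\to\Sigma M_0$ with $(\Sigma f)m'=\Sigma t$. Fullness of $\Sigma|_{\mathcal{M}}$ writes $m'=\Sigma m$ for some $m:M\to M_0$, and faithfulness then gives $fm=t$, so $t\in\mathrm{Im}\,f_\ast$. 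Your handling of surjectivity via (RC2), and the final conclusion that the cokernel lies in $\mathrm{mod}\,\mathcal{M}$, are correct.
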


\begin{proof}
Let $M_{0}\xrightarrow{f}M_{1}\xrightarrow{g} X\xrightarrow{h}\Sigma
M_{0}$ be a right triangle with $M_{0},M_{1}\in\mathcal{M}$. For any
$M\in\mathcal{M}$, we claim that the following sequence is exact
$$\text{Hom}_{\mathcal{C}}(M,M_{0})\xrightarrow{f_{\ast}}\text{Hom}
_{\mathcal{C}}(M,M_{1})\xrightarrow{g_{\ast}}\text{Hom}
_{\mathcal{C}}(M,X)\rightarrow0.\quad(\star)$$ In fact, by Lemma 3.4
(a), we have $ gf=0$, hence Im$f_{\ast}\subseteq$Ker$g_{\ast}$. For
any $ t\in$Ker$g_{\ast}$, we have the following commutative diagram
of right triangles by (RTR3)
$$\xymatrix{
M\ar[r]\ar[d]^{t} & 0\ar[r]\ar[d] & \Sigma M\ar[r]^{-\Sigma1_{M}}\ar@{-->}[d]^{m'} & \Sigma M\ar[d]^{\Sigma t} \\
M_{1}\ar[r]^{g}           & X\ar[r]^{h}   & \Sigma
M_{0}\ar[r]^{-\Sigma f} & \Sigma M_{1}.}
$$
Since $\Sigma|_{\mathcal{M}}$ is full, there exists a morphism
$m:M\rightarrow M_{0}$ such that $m'=\Sigma m$, so $\Sigma t=\Sigma
(fm)$. Since $\Sigma|_{\mathcal{M}}$ is faithful,
$t=fm=f_{\ast}(m)\in$Im$f_{\ast}$, then Im$f_{\ast}\supseteq$Ker$g_{
\ast}$. Hence Im$f_{\ast}$=Ker$g_{ \ast}$. On the other hand, by
(RC2), $g_{\ast}$ is surjective. So ($\star$) is exact. Since $M$ is
arbitrary in $\mathcal{M}$, there exists an exact sequence
$$\mbox{Hom}
_{\mathcal{M}}(-,M_{0})\xrightarrow{\text{Hom}_{\mathcal{M}}(-,f)}
\mbox{Hom}_{\mathcal{M}}(-,M_{1})\xrightarrow{\text{Hom}_{\mathcal{C}}(-,g)}
\text{Hom}_{\mathcal{C}}(-,X)|_{\mathcal{M}}\rightarrow0.$$
%so $F(X)\in$ mod$\mathcal{M}$.
\end{proof}

\noindent{\bf Proof of Theorem 3.6.} By Lemma 3.7, we have an
additive functor $F:\mathcal{M}\ast\Sigma\mathcal{M}\rightarrow
\text{Mod}\mathcal{M}$, which is defined by
$F(X)=\text{Hom}_{\mathcal{C}}(-,X)|_{\mathcal{M}}$.

%for any $X\in\mathcal{M}\ast\Sigma\mathcal{M}$.

%By the lemma above, $\overline{F}$ can be seen as a functor from
%$\mathcal{M}\ast\Sigma\mathcal{M}\big{/}\Sigma\mathcal{M}$ to mod
%$\mathcal{M}$. In the next, we give the proof of Theorem 3.5.

%\begin{proof}

%Let us prove that
%$\overline{F}:\mathcal{M}\ast\Sigma\mathcal{M}\big{/}\Sigma\mathcal{M}\rightarrow
%\text{mod }\mathcal{M}$ is dense and fully faithful.

%$\forall X,Y\in\mathcal{M}\ast\Sigma\mathcal{M}$, there exist two
%right triangles
%$M_{0}\xrightarrow{f_{1}}M_{1}\xrightarrow{g_{1}}X\xrightarrow{h_{1}}\Sigma
%M_{0}$ and
%$N_{0}\xrightarrow{f_{2}}N_{1}\xrightarrow{g_{2}}X\xrightarrow{h_{2}}\Sigma
%M_{0}$, where $M_{0},M_{1},N_{0},N_{1}\in\mathcal{M}$.

Firstly, we show that $F$ is dense.

For any object $G\in$ mod$\mathcal{M}$, there exists an exact
sequence $$\mbox{Hom}_{\mathcal{M}}(-,M')\xrightarrow{\alpha}
\mbox{Hom}_{\mathcal{M}}(-,M'')\rightarrow G\rightarrow0$$ with
$M',M''\in\mathcal{M}$. By Yoneda's Lemma, there exists a morphism
$f:M'\rightarrow M''$ such that $\alpha$=Hom$_{\mathcal{M}}(-,f)$.
Then by (RTR1), there exists a right triangle
$M'\xrightarrow{f}M''\xrightarrow{g}Z\xrightarrow{h}\Sigma M'$. By
Lemma 3.7, there exists an exact sequence
Hom$_{\mathcal{M}}(-,M')\xrightarrow{\alpha}$
Hom$_{\mathcal{M}}(-,M'')\rightarrow F(Z)\rightarrow0$, thus
$G$=Coker$\alpha\cong F(Z)$. Hence $F$ is dense.

Secondly, we show that $F$ is full.

For any morphism $\beta:F(X)\rightarrow F(Y)$ in mod $\mathcal{M}$,
 because  Hom$_{\mathcal{M}}(-,M_{1})$
 is a projective object in mod$\mathcal{M}$, we have the following
commutative diagram with exact rows in Mod$\mathcal{M}$
$$\xymatrix{
\text{Hom}_{\mathcal{M}}(-,M_{0})\ar[rr]^{\text{Hom}_{\mathcal{M}}(-,f_{1})}\ar@{-->}[d]^{\gamma_{0}}&
& \text{Hom}_{\mathcal{M}}(-,M_{1})\ar[r]\ar@{-->}[d]^{\gamma_{1}} &
F(X)\ar[d]^{\beta}\ar[r] & 0 \\
\text{Hom}_{\mathcal{M}}(-,N_{0})\ar[rr]^{\text{Hom}_{\mathcal{M}}(-,f_{2})}&
& \text{Hom}_{\mathcal{M}}(-,N_{1})\ar[r]& F(Y)\ar[r] & 0.}
$$
By Yoneda's Lemma, for $i=0,1$, there exists a morphism
$m_{i}:M_{i}\rightarrow N_{i}$ such that
$\gamma_{i}$=Hom$_{\mathcal{M}}(-, m_{i})$ and
$m_{1}f_{1}$=$f_{2}m_{0}$. Hence by (RTR3) we have the following
commutative diagram of right triangles
$$\xymatrix{
M_{0}\ar[r]^{f_{1}}\ar[d]^{m_{0}} &
M_{1}\ar[r]^{g_{1}}\ar[d]^{m_{1}} &
X\ar[r]^{h_{1}}\ar@{-->}[d]^{s} & \Sigma M_{0}\ar[d]^{\Sigma m_{0}} \\
N_{0}\ar[r]^{f_{2}} & N_{1}\ar[r]^{g_{2}} & Y\ar[r]^{h_{2}} & \Sigma
N_{0}. }
$$
Then by Lemma 3.7, we have the following commutative diagram with
exact rows in Mod$\mathcal{M}$
$$\xymatrix{
\text{Hom}_{\mathcal{M}}(-,M_{0})\ar[rr]^{\text{Hom}_{\mathcal{M}}(-,f_{1})}\ar[d]^{\gamma_{0}}&
& \text{Hom}_{\mathcal{M}}(-,M_{1})\ar[r]\ar[d]^{\gamma_{1}} &
F(X)\ar[d]^{F(s)}\ar[r] & 0 \\
\text{Hom}_{\mathcal{M}}(-,N_{0})\ar[rr]^{\text{Hom}_{\mathcal{M}}(-,f_{2})}&
& \text{Hom}_{\mathcal{M}}(-,N_{1})\ar[r]& F(Y)\ar[r] & 0.}
$$
So $\beta=F(s)$. Hence $F$ is full.

At last, in order to show
$\mathcal{M}\ast\Sigma\mathcal{M}/\Sigma\mathcal{M}\cong\text{mod}\mathcal{M}$,
by Lemma 2.1 we only need to prove that any morphism $t:X\rightarrow
Y$ in $\mathcal{M}\ast\Sigma\mathcal{M}$ satisfying $F(t)=0$ factors
through some object in $\Sigma\mathcal{M}$.

In fact, let
$M_{0}\xrightarrow{f_{1}}M_{1}\xrightarrow{g_{1}}X\xrightarrow{h_{1}}\Sigma
M_{0}$ be a right triangle with $M_0, M_1\in\mathcal{M}$, then
$tg_{1}=0$ since $F(t)=0$. Thus by Lemma 3.4(a), $t$ factors through
$h_{1}$, so $t$ factors through $\Sigma M_{0}\in\Sigma\mathcal{M}$.
$\Box$

Applying Theorem 3.6, we can get the following two corollaries.

\begin{cor} ([4, Proposition 6.2])
Let $\mathcal{C}$ be a triangulated category with the shift functor
$\Sigma$ and $\mathcal{M}$ be a rigid subcategory of
 $ \mathcal{C}$. Then there exists an equivalence of categories
$\mathcal{M}\ast\Sigma\mathcal{M}\big{/}\Sigma\mathcal{M}\cong\text{mod}\mathcal{M}$.
\end{cor}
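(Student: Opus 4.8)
The plan is to deduce the statement directly from Theorem 3.6 by checking that the two extra hypotheses (RC1) and (RC2) are automatically satisfied for \emph{any} triangulated category. By Example 3.2, a triangulated category $\mathcal{C}$ is in particular a right triangulated category in which the right triangles are precisely the distinguished triangles and in which $\Sigma$ is an equivalence; consequently the subcategory $\mathcal{M}\ast\Sigma\mathcal{M}$ occurring in Theorem 3.6 is exactly the one appearing in the corollary. So once (RC1) and (RC2) are in place, Theorem 3.6 applies verbatim.

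Checking (RC1) is immediate: since $\Sigma$ is an equivalence of $\mathcal{C}$, it is fully faithful on all of $\mathcal{C}$, hence a fortiori fully faithful when restricted to $\mathcal{M}$.

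For (RC2), let $M_{0}\xrightarrow{f}M_{1}\xrightarrow{g}X\xrightarrow{h}\Sigma M_{0}$ be a distinguished triangle with $M_{0},M_{1}\in\mathcal{M}$, and fix an arbitrary $M\in\mathcal{M}$. Applying the cohomological functor $\mathrm{Hom}_{\mathcal{C}}(M,-)$ to this triangle produces an exact sequence
$$\mathrm{Hom}_{\mathcal{C}}(M,M_{1})\xrightarrow{g_{\ast}}\mathrm{Hom}_{\mathcal{C}}(M,X)\xrightarrow{h_{\ast}}\mathrm{Hom}_{\mathcal{C}}(M,\Sigma M_{0}).$$
Because $\mathcal{M}$ is rigid and both $M$ and $M_{0}$ lie in $\mathcal{M}$, the group $\mathrm{Hom}_{\mathcal{C}}(M,\Sigma M_{0})$ vanishes, so $h_{\ast}=0$ and hence $g_{\ast}$ is surjective. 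As $M\in\mathcal{M}$ was arbitrary, this says precisely that $g$ is a right $\mathcal{M}$-approximation of $X$, which is (RC2).

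With (RC1) and (RC2) verified, Theorem 3.6 yields the equivalence $\mathcal{M}\ast\Sigma\mathcal{M}\big{/}\Sigma\mathcal{M}\cong\mathrm{mod}\,\mathcal{M}$. There is essentially no obstacle in this argument; the one point worth emphasizing is that the vanishing of $\mathrm{Hom}_{\mathcal{C}}(M,\Sigma M_{0})$ coming from rigidity is exactly what forces the approximation property, and this is special to the fully triangulated situation — in the merely right triangulated setting of Theorem 3.6 one genuinely has to impose (RC2) as a hypothesis, which is why the theorem is strictly more general than this corollary.
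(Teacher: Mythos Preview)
Your proof is correct and follows essentially the same approach as the paper: verify (RC1) using that $\Sigma$ is an equivalence, and verify (RC2) using rigidity to kill $\mathrm{Hom}_{\mathcal{C}}(M,\Sigma M_{0})$. The only cosmetic difference is that the paper phrases the (RC2) verification via Lemma~3.4(b) (i.e.\ $g$ is a pseudokernel of $h$ when $\Sigma$ is fully faithful) rather than invoking the long exact sequence of the cohomological functor $\mathrm{Hom}_{\mathcal{C}}(M,-)$ directly, but these are the same argument.
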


\begin{proof} Since the shift functor $\Sigma$ is
 an equivalence, we know that  $\Sigma|_{\mathcal{M}}$ is fully
faithful. Let $M_{0}\xrightarrow{f} M_{1}\xrightarrow
{g}X\xrightarrow{h}\Sigma
 M_{0}$ be a triangle in $\mathcal{C}$, where
$M_{0},M_{1}\in\mathcal{M}$. Since $\mathcal{M}$ is rigid, we know
that $g$ is a right $\mathcal{M}$-approximation of $X$ by Lemma
3.4(b). Thus, condition (RC1) and (RC2) hold.
 % Then $\mathcal{M}$ is a rigid
%subcategory of right triangulated category $\mathcal{C}$. Denote by
%$\mathcal{M}\ast\Sigma\mathcal{M}$ the subcategory of objects $X$ in
%$\mathcal{C}$ such that there exist right triangles , Since $\Sigma$
%is auto-equivalent and $\mathcal{C}$ has cohomological functors, we
%can easily prove that $\Sigma|_{\mathcal{M}}$ is fully faithful and
%$\mathcal{M}$ satisfies the condition (RC2) in Theorem 3.5.
\end{proof}

\begin{defn}
Let $\mathcal{B}$ be an exact category and $\mathcal{M}$ be a full
subcategory of $\mathcal{B}$.  $\mathcal{M}$ is called rigid if
Ext$^1_B(\mathcal{M},\mathcal{M})=0$.
\end{defn}

\begin{cor} ([6, Theorem 3.5])
Let $\mathcal{B}$ be an exact category which contains enough
injectives, and $\mathcal{M}$ be a rigid subcategory of
$\mathcal{B}$ containing all injectives. Denote by $\mathcal{I}$ the
subcategory of injectives, and by $\overline{\mathcal{M}}$ the
quotient category $\mathcal{M}/\mathcal{I}$. Denote by
$\mathcal{M}_{R}$ the subcategory of objects $X$ in $\mathcal{B}$
such that there exist short exact sequences $0\rightarrow
M_{0}\rightarrow M_{1}\rightarrow X\rightarrow0$, where
$M_{0},M_{1}\in\mathcal{M}$. Denote by $\Sigma\mathcal{M}$ the
subcategory of objects $Y$ in $\mathcal{B}$ such that there exist
short exact sequences $0\rightarrow M\rightarrow I\rightarrow
Y\rightarrow0$, where $M\in\mathcal{M}$, $I\in\mathcal{I}$. Then
$\mathcal{M}_{R}\big{/}\Sigma\mathcal{M} \cong\text{
mod}\overline{\mathcal{M}}$.
\end{cor}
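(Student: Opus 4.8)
The plan is to deduce Corollary 3.11 from Theorem 3.6 by realizing $\mathcal{B}$ inside its stable category $\overline{\mathcal{B}}=\mathcal{B}/\mathcal{I}$, which is right triangulated by Example 3.3, and checking that the hypotheses (RC1), (RC2) transfer. First I would recall from Example 3.3 the explicit shift functor $\Sigma$ on $\overline{\mathcal{B}}$ and the dictionary between short exact sequences $0\to X\to Y\to Z\to 0$ in $\mathcal{B}$ and right triangles $X\to Y\to Z\to \Sigma X$ in $\overline{\mathcal{B}}$. Under the projection $\pi:\mathcal{B}\to\overline{\mathcal{B}}$, the subcategory $\mathcal{M}$ (containing $\mathcal{I}$) maps to $\overline{\mathcal{M}}=\mathcal{M}/\mathcal{I}$, and I claim $\overline{\mathcal{M}}$ is a rigid subcategory of $\overline{\mathcal{B}}$ in the sense of Definition 3.5: indeed $\mathrm{Hom}_{\overline{\mathcal{B}}}(\overline{M},\Sigma\overline{M'})$ is computed by choosing $0\to M'\to I\to \Sigma M'\to 0$ with $I\in\mathcal{I}$ and is a quotient of $\mathrm{Ext}^1_{\mathcal{B}}(M,M')=0$ (this is the standard identification of stable Hom with $\mathrm{Ext}^1$ against an injective copresentation), so it vanishes by the rigidity hypothesis $\mathrm{Ext}^1_{\mathcal{B}}(\mathcal{M},\mathcal{M})=0$.

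Next I would verify the two conditions of Theorem 3.6 for $\overline{\mathcal{M}}\subset\overline{\mathcal{B}}$. For (RC1), $\Sigma$ restricted to $\overline{\mathcal{M}}$ is fully faithful: faithfulness and fullness both reduce, via the defining exact sequences $0\to M\to I_M\to\Sigma M\to 0$ with $I_M\in\mathcal{I}\subset\mathcal{M}$, to the long exact sequences obtained by applying $\mathrm{Hom}_{\mathcal{B}}(M',-)$ and using $\mathrm{Ext}^1_{\mathcal{B}}(M',M)=0$ together with the fact that $I_M\in\mathcal{M}$ so that maps into $I_M$ are detected after passing to $\overline{\mathcal{B}}$ in the expected way; this is exactly where the hypothesis ``$\mathcal{M}$ contains all injectives'' is used. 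For (RC2), given $M_0,M_1\in\mathcal{M}$ and a right triangle $M_0\xrightarrow{f}M_1\xrightarrow{g}X\xrightarrow{h}\Sigma M_0$ in $\overline{\mathcal{B}}$, I would lift it (via the dictionary of Example 3.3, possibly after replacing $M_1$ by $M_1\oplus I$ for some $I\in\mathcal{I}\subset\mathcal{M}$, which does not change anything up to isomorphism in $\overline{\mathcal{B}}$) to a genuine short exact sequence $0\to M_0\to M_1\to X\to 0$ in $\mathcal{B}$; then I must show $g$ is a right $\overline{\mathcal{M}}$-approximation of $X$, i.e. every $\overline{s}:\overline{N}\to X$ with $N\in\mathcal{M}$ factors through $\overline{g}$. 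Lift $\overline{s}$ to $s:N\to X$ in $\mathcal{B}$; pulling back the exact sequence along $s$ gives $0\to M_0\to E\to N\to 0$, and since $\mathrm{Ext}^1_{\mathcal{B}}(N,M_0)=0$ this sequence splits, yielding a lift $N\to M_1$ of $s$ through the surjection $M_1\to X$, which after applying $\pi$ shows $\overline{s}$ factors through $\overline{g}$.

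With (RC1) and (RC2) established, Theorem 3.6 gives $\overline{\mathcal{M}}\ast\Sigma\overline{\mathcal{M}}\,/\,\Sigma\overline{\mathcal{M}}\cong\mathrm{mod}\,\overline{\mathcal{M}}$, and it remains only to match this left-hand side with $\mathcal{M}_R/\Sigma\mathcal{M}$. Here I would argue that $\pi$ sends $\mathcal{M}_R$ onto $\overline{\mathcal{M}}\ast\Sigma\overline{\mathcal{M}}$: an object $X$ with a short exact sequence $0\to M_0\to M_1\to X\to 0$ ($M_i\in\mathcal{M}$) gives by Example 3.3 a right triangle $M_0\to M_1\to X\to\Sigma M_0$ in $\overline{\mathcal{B}}$, so $\pi(X)\in\overline{\mathcal{M}}\ast\Sigma\overline{\mathcal{M}}$; conversely any object of $\overline{\mathcal{M}}\ast\Sigma\overline{\mathcal{M}}$ is isomorphic in $\overline{\mathcal{B}}$ to some such $\pi(X)$ with $X\in\mathcal{M}_R$. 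I would then observe that the $\mathcal{I}$-quotient $\overline{\mathcal{B}}$-structure identifies $\Sigma\overline{\mathcal{M}}$ (the subcategory of $\overline{\mathcal{B}}$ of objects admitting a right triangle $0\to \Sigma^{-1}(\,\cdot\,)\to 0$, concretely $\Sigma M$ for $M\in\mathcal{M}$) with the essential image of the subcategory $\Sigma\mathcal{M}\subset\mathcal{B}$ defined by $0\to M\to I\to Y\to 0$; hence $(\mathcal{M}_R/\mathcal{I})/(\Sigma\mathcal{M}/\mathcal{I})\cong\overline{\mathcal{M}}\ast\Sigma\overline{\mathcal{M}}\,/\,\Sigma\overline{\mathcal{M}}$, and by Lemma 2.2 (with $\mathcal{I}\subset\Sigma\mathcal{M}$, which holds since every injective $I$ fits in $0\to 0\to I\to I\to 0$) the left side equals $\mathcal{M}_R/\Sigma\mathcal{M}$. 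Combining the displayed equivalences finishes the proof.

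The main obstacle I expect is the bookkeeping in the two directions of (RC2) and in the identification $\pi(\mathcal{M}_R)\simeq\overline{\mathcal{M}}\ast\Sigma\overline{\mathcal{M}}$: one must be careful that lifting a right triangle in $\overline{\mathcal{B}}$ to an honest short exact sequence in $\mathcal{B}$ is only possible up to adding injective summands and up to isomorphism in $\overline{\mathcal{B}}$, and that the splitting argument (using $\mathrm{Ext}^1_{\mathcal{B}}=0$) genuinely produces the approximation property \emph{on the level of $\overline{\mathcal{B}}$-morphisms}, not merely $\mathcal{B}$-morphisms. Everything else — rigidity of $\overline{\mathcal{M}}$, full faithfulness of $\Sigma|_{\overline{\mathcal{M}}}$, and the final application of Lemma 2.2 — is routine once the Example 3.3 dictionary is in hand.
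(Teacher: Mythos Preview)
Your proposal is correct and follows essentially the same route as the paper: pass to the right triangulated stable category $\overline{\mathcal{B}}$, verify that $\overline{\mathcal{M}}$ is rigid and satisfies (RC1) and (RC2), apply Theorem 3.6, and then identify $\mathcal{M}_R/\Sigma\mathcal{M}$ with $\overline{\mathcal{M}}\ast\Sigma\overline{\mathcal{M}}/\Sigma\overline{\mathcal{M}}$ via Lemma 2.2 and the Example 3.3 dictionary. The paper's proof is organized into exactly these five steps (a)--(e), with the only cosmetic difference being that for (RC2) it argues directly that $\mathrm{Hom}_{\mathcal{B}}(M,g)$ is surjective (from the long exact sequence and $\mathrm{Ext}^1_{\mathcal{B}}(M,M_0)=0$) rather than phrasing it via pullback and splitting---these are equivalent.
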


\begin{proof}
According to Theorem 3.6, we prove the corollary by several steps.

(a) $\overline{\mathcal{M}}$ is a rigid subcategory of the right
triangulated category
$\overline{\mathcal{B}}=\mathcal{B}/\mathcal{I}$.

Let $\Sigma$ be the shift functor of $\overline{\mathcal{B}}$, then
it is easy to see that $\Sigma
\overline{\mathcal{M}}=\overline{\Sigma\mathcal{M}}$. We claim that
Hom$_{\overline{\mathcal{B}}}(\overline{\mathcal{M}},\Sigma\overline{\mathcal{M}})=0$.
In fact, for any $\overline{f}\in\hom_{\overline{\mathcal{B}}}(M,
Y)$, where $M\in\overline{\mathcal{M}}$ and
$Y\in\Sigma\overline{\mathcal{M}}$. There is a short exact sequence
$0\rightarrow M'\xrightarrow{i} I\xrightarrow{d} Y\rightarrow0$,
where $M'\in\mathcal{M}$, $I\in\mathcal{I}$.  Since $\mathcal{M}$ is
rigid in $\mathcal{B}$, applying Hom$_\mathcal{B}(M,-)$ to the short
exact sequence, we have an exact sequence
$$0\rightarrow \mbox{Hom}(M,M')\xrightarrow {i_\ast}\mbox{Hom}(M,I)\xrightarrow {d_\ast}\mbox{Hom}(M,Y)\rightarrow0.$$
So $d$ is a right $\mathcal{M}$-approximation of $Y$. Thus, $f$
factors through $I$, hence $\overline{f}=0$.

(b)
 $\overline{{\mathcal{M}}_{R}}=\overline{\mathcal{M}}\ast\Sigma\overline{\mathcal{M}}$.

It follows from Example 3.3.

(c)
$\mathcal{M}_{R}\big{/}\Sigma\mathcal{M}\cong\overline{\mathcal{M}_{R}}\big{/}\Sigma\overline{\mathcal{M}}$.

It follows from Lemma 2.2 since
$\mathcal{I}\subset\Sigma\mathcal{M}\subset\mathcal{M}_{R}$ and
$\Sigma \overline{\mathcal{M}}=\overline{\Sigma\mathcal{M}}$.

(d) $\Sigma|_{\overline{\mathcal{M}}}$ is fully faithful.

For any $M',M''\in\mathcal{M}$, there exist two short exact
sequences $0\rightarrow
M'\xrightarrow{i_{M'}}I_{M'}\xrightarrow{d_{M'}}\Sigma
M'\rightarrow0$ and $0\rightarrow
M''\xrightarrow{i_{M''}}I_{M''}\xrightarrow{d_{M''}}\Sigma
M''\rightarrow0$, where $I_{M'},I_{M''}\in\mathcal{I}$, and
$d_{M'}$,$d_{M''}$ are right $\mathcal{M}$-approximations.

 For any morphism $\alpha:\Sigma M'\rightarrow\Sigma M''$ in
$\mathcal{B}$, since $d_{M''}$ is a right
$\mathcal{M}$-approximation and
$I_{M'}\in\mathcal{I}\subset\mathcal{M}$ , we have the following
commutative diagram with exact rows in $\mathcal{B}$
$$\xymatrix{
0\ar[r] & M'\ar@{-->}[d]^{m}\ar[r]^{i_{M'}} &
I_{M'}\ar[r]^{d_{M'}}\ar@{-->}[d]^{j} & \Sigma
M'\ar[r]\ar[d]^{\alpha}
& 0 \\
0\ar[r] & M''\ar[r]^{i_{M''}} & I_{M''}\ar[r]^{d_{M''}} & \Sigma
M''\ar[r] & 0.}
$$
Hence we have $\overline{\alpha}=\Sigma\overline{m}$ by the
definition of $\Sigma$, thus $\Sigma|_{\mathcal{M}}$ is full.

 For any morphism $f:M'\rightarrow M''$ in $\mathcal{B}$, Since
$I_{M'}$ is an injective object, we have the following commutative
diagram of short exact sequences %with exact rows in $\mathcal{B}$:
$$\xymatrix{
0\ar[r] & M'\ar[d]^{f}\ar[r]^{i_{M'}} &
I_{M'}\ar[r]^{d_{M'}}\ar[d]^{i_{f}} & \Sigma M'\ar[r]\ar[d]^{\Sigma
f}
& 0 \\
0\ar[r] & M''\ar[r]^{i_{M''}} & I_{M''}\ar[r]^{d_{M''}} & \Sigma
M''\ar[r] & 0.}
$$
Suppose $\Sigma \overline{f}=0$, then $\Sigma f$ factors through
some object in $\mathcal{I}$.
 Because $d_{M''}$ is right $\mathcal{M}$-approximation, $\Sigma
f$ factors through $I_{M''}$, i.e. there exists a morphism $a:\Sigma
M'\rightarrow I_{M''}$ such that $\Sigma f=d_{M''}a$. Then
$d_{M''}(i_{f}-ad_{M'})=d_{M''}i_{f}-(\Sigma f)d_{M'}=0$, thus there
exists a morphism $b:I_{M'}\rightarrow M''$ such that
$i_{M''}b=i_{f}-ad_{M'}$, so
$i_{M''}(f-bi_{M'})=i_{M''}f-i_{f}i_{M'}+ad_{M'}i_{M'}=0$. Since
$i_{M''}$ is a monomorphism, $f=bi_{M'}$, thus $f$ factors through
$I_{M'}$. Hence $\overline{f}=0$ and $\Sigma|_{\mathcal{M}}$ is
faithful.

(e) Let
$M'\xrightarrow{\overline{f}}M''\xrightarrow{\overline{g}}X\xrightarrow{\overline{h}}\Sigma
M'$ be a right triangle in $\overline{\mathcal{B}}$ with
$M',M''\in\mathcal{M}$, then $\overline{g}$ is a right
$\overline{\mathcal{M}}$-approximation of $X$.

According to Example 3.3 and $\mathcal{I}\subset\mathcal{M}$, we can
assume that there is a short exact sequence $0\rightarrow
M'\xrightarrow{f} M''\xrightarrow{g}X\rightarrow0$. Since
$\mathcal{M}$ is rigid,
%$(g,-i_f)$ is a right
%$\mathcal{M}$-approximation of $X$. Then
there exists an epimorphism Hom$_{\mathcal{B}}(M,g):$ Hom$
_{\mathcal{B}}(M,M'')\rightarrow$Hom$_{\mathcal{B}}(M,X)$ for any
$M$ in $\mathcal{M}$. Thus we have an epimorphism
Hom$_{\mathcal{\overline{B}}}(M,\overline{g}):$  Hom$
_{\mathcal{\overline{B}}}(M,$ $ M'')\rightarrow$
Hom$_{\mathcal{\overline{B}}}(M,X)$, i.e. $\overline{g}$ is a right
$\overline{\mathcal{M}}$-approximation of $X$.
\end{proof}

%\begin{example} Let $Q:4\rightarrow3\rightarrow2\rightarrow1$,
%$\mathcal{C}=\mathcal{C}_{Q}$, the cluster category of $Q$ whose AR
%quiver is the following:
%$$\xymatrix{
%&&&P_{4}\ar[rd]&&P_{1}[1]\ar[rd]&&&\\
%&&P_{3}\ar[rd]\ar[ru]&&I_{2}\ar[rd]\ar[ru]&&P_{2}[1]\ar[rd]&&\\
%&P_{2}\ar[rd]\ar[ru]&&X\ar[rd]\ar[ru]&&I_{3}\ar[rd]\ar[ru]&&P_{3}[1]\ar[rd]&\\
%P_{1}\ar[ru]&&S_{2}\ar[ru]&&S_{3}\ar[ru]&&I_{4}\ar[ru]&&P_{4}[1].\\
%}
%$$
%We take $\mathcal{Y}=add\{S_{3}\}$. Then by [12],
%$\mathcal{C}/\mathcal{Y}$ is a right triangulated category  and
%$<1>$ is the shift functor. Denote $\Sigma=<1>$. Let
%$\mathcal{M}=add\{P_{1},P_{2},P_{4}\}$. Then $\mathcal{M}$ is a
%rigid subcategory in $\mathcal{C}/\mathcal{Y}$ satisfying
%$\Sigma|_{\mathcal{M}}$ is fully faithful. And (RC2) is natural.
%Hence we have
%$\mathcal{M}\ast\Sigma\mathcal{M}=add\{P_{1},P_{2},P_{4},S_{2},I_{2},I_{3},P_{1}[1],P_{2}[1],P_{4}[1]\}$.
%Then by Theorem 3.5,
%$\mathcal{M}\ast\Sigma\mathcal{M}\big{/}\Sigma\mathcal{M}=add\{P_{1},P_{2},P_{4},S_{2},I_{2},I_{3}\}$
%is equivalent to the module category over $\mathcal{M}$, which is an
%abelian category. In fact
%$\mathcal{M}\ast\Sigma\mathcal{M}\big{/}\Sigma\mathcal{M}$ is
%equivalent to the module category over path algebra $kQ'$, where
%$Q':3\rightarrow2\rightarrow1$.
%\end{example}

\section{Subquotient categories of left triangulated categories}

The definition of left triangulated category is dual to right
triangulated category. For convenience,  we recall the definition
and some facts.

\begin{defn} ([7])
A left triangulated category is a triple
$(\mathcal{C},\Omega,\lhd)$, or simply $\mathcal{C}$, where:

(a)\quad $\mathcal{C}$ is an additive category.

(b)\quad $\Omega:\mathcal{C}\rightarrow\mathcal{C}$ is an additive
functor, called the shift functor of $\mathcal{C}$.

(c)\quad $\lhd$ is a class of sequences of three morphisms of the
form $\Omega Z\xrightarrow{x}X\xrightarrow{y}Y\xrightarrow{z}Z$,
called left triangles, and satisfying the following axioms:

(LTR0)\quad If $\Omega
Z\xrightarrow{x}X\xrightarrow{y}Y\xrightarrow{z}Z$ is a left
triangle, and $\Omega
Z'\xrightarrow{x'}X'\xrightarrow{y'}Y'\xrightarrow{z'}Z'$ is a
sequence of morphisms such that there exists a commutative diagram
in $\mathcal{C}$
$$\xymatrix{
\Omega Z\ar[r]^{x}\ar[d]^{\Omega h} & X\ar[r]^{y}\ar[d]^{f} &
Y\ar[r]^{z}\ar[d]^{g} & Z\ar[d]^{h} \\
\Omega Z'\ar[r]^{x'} & X'\ar[r]^{y'} & Y'\ar[r]^{z'} & Z', }
$$
where $f,g,h$ are isomorphisms, then $\Omega
Z'\xrightarrow{x'}X'\xrightarrow{y'}Y'\xrightarrow{z'}Z'$ is also a
left triangle.

(LTR1)\quad For any $X\in\mathcal{C}$, the sequence
$0\xrightarrow{}X\xrightarrow{1_{X}}X\xrightarrow{}0$ is a left
triangle. And for every morphism $z:Y\rightarrow Z$ in
$\mathcal{C}$, there exists a left triangle $\Omega
Z\xrightarrow{x}X\xrightarrow{y}Y\xrightarrow{z}Z$.

(LTR2)\quad If $\Omega
Z\xrightarrow{x}X\xrightarrow{y}Y\xrightarrow{z}Z$ is a left
triangle, then so is $\Omega Y\xrightarrow{-\Omega z}\Omega
Z\xrightarrow{x}X\xrightarrow{y}Y$.

(LTR3)\quad For any two left triangles $\Omega
Z\xrightarrow{x}X\xrightarrow{y}Y\xrightarrow{z}Z$ and $\Omega
Z'\xrightarrow{x'}X'\xrightarrow{y'}Y'\xrightarrow{z'}Z'$, and any
two morphisms $g:Y\rightarrow Y'$, $h:Z\rightarrow Z'$ such that
$hz=z'g$, there exists $f:X\rightarrow X'$ making the following
diagram commutative
$$\xymatrix{
\Omega Z\ar[r]^{x}\ar[d]^{\Omega h} & X\ar[r]^{y}\ar@{-->}[d]^{f} &
Y\ar[r]^{z}\ar[d]^{g} & Z\ar[d]^{h} \\
\Omega Z'\ar[r]^{x'} & X'\ar[r]^{y'} & Y'\ar[r]^{z'} & Z' }
$$

(LTR4)\quad For any two left triangles $\Omega
Z\xrightarrow{x}X\xrightarrow{y}Y\xrightarrow{z}Z$ and $\Omega
Z'\xrightarrow{x'}X'\xrightarrow{y'}Z\xrightarrow{z'}Z'$, there
exists a commutative diagram
$$\xymatrix{
                              & \Omega Y'\ar[r]^{\Omega y'}\ar[d]^{x\cdot\Omega y'} & \Omega Z\ar[d]^{x} &  \\
                              & X\ar@{=}[r]\ar[d]^{g}  & X\ar[d]^{y}  &           \\
\Omega Z'\ar[r]^{u}\ar@{=}[d] & X'\ar[r]^{v}\ar[d]^{h} & Y\ar[r]^{z'\cdot z}\ar[d]^{z} & Z'\ar@{=}[d]                    \\
\Omega Z'\ar[r]^{x'}          & Y'\ar[r]^{y'} & Z\ar[r]^{z'} & Z', }
$$
where the third row and the second column are left triangles.
\end{defn}

\begin{example}
A triangulated category is a left triangulated category.
\end{example}

\begin{example}
Let $\mathcal{B}$ be an exact category with enough projectives.
Denote by $\mathcal{P}$ the subcategory of $\mathcal{B}$ consisting
of projectives. Then the quotient category
$\underline{\mathcal{B}}=\mathcal{B}/\mathcal{P}$ is a left
triangulated category.
\end{example}

By (LTR0) and (LTR2), we have the following easy lemma.

\begin{lem}
Let $\Omega Z\xrightarrow{x}X\xrightarrow{y}Y\xrightarrow{z}Z$ be a
left triangle, then so is $\Omega Y\xrightarrow{\Omega z}\Omega
Z\xrightarrow{x}X\xrightarrow{-y}Y$.
\end{lem}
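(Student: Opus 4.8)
The plan is to obtain the statement directly from axioms (LTR2) and (LTR0), as the remark preceding it suggests. First I would apply (LTR2) to the given left triangle $\Omega Z\xrightarrow{x}X\xrightarrow{y}Y\xrightarrow{z}Z$; this yields that
$$\Omega Y\xrightarrow{-\Omega z}\Omega Z\xrightarrow{x}X\xrightarrow{y}Y$$
is again a left triangle. The sequence we wish to recognize as a left triangle, namely $\Omega Y\xrightarrow{\Omega z}\Omega Z\xrightarrow{x}X\xrightarrow{-y}Y$, differs from this one only by a sign change in the outer two morphisms, so it should be isomorphic, as a complex, to the left triangle just produced; invoking (LTR0) then finishes the argument.

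Concretely, I would exhibit the commutative diagram
$$\xymatrix{
\Omega Y\ar[r]^{-\Omega z}\ar[d]_{-1_{\Omega Y}} & \Omega Z\ar[r]^{x}\ar[d]_{1_{\Omega Z}} & X\ar[r]^{y}\ar[d]_{1_{X}} & Y\ar[d]^{-1_{Y}} \\
\Omega Y\ar[r]^{\Omega z} & \Omega Z\ar[r]^{x} & X\ar[r]^{-y} & Y, }$$
whose top row is the left triangle from the previous step and whose vertical maps are all isomorphisms. The middle square commutes trivially; the rightmost square commutes because both composites equal $-y$; and the leftmost square commutes because both composites equal $-\Omega z$. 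The only point worth checking is that the leftmost vertical map is legitimately of the form $\Omega h$ with $h$ the rightmost vertical: since $\Omega$ is additive, $\Omega(-1_{Y})=-1_{\Omega Y}$, so taking $h=-1_{Y}$ does the job. By (LTR0), the bottom row $\Omega Y\xrightarrow{\Omega z}\Omega Z\xrightarrow{x}X\xrightarrow{-y}Y$ is therefore a left triangle, which is exactly the assertion.

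I do not expect any genuine obstacle here: the entire content is the bookkeeping of the two signs together with the observation that additivity of $\Omega$ turns $\Omega(-1_{Y})$ into $-1_{\Omega Y}$, so that (LTR0) applies on the nose rather than up to a further sign. The lemma is the left-triangulated counterpart of the backward-rotation consequence of (RTR2) used implicitly in Section~3, and it will presumably serve the analogous bookkeeping role in the arguments of Section~4.
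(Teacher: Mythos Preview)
Your argument is correct and is exactly the approach the paper intends: the remark preceding the lemma says it follows from (LTR0) and (LTR2), and the paper gives no further proof. Your explicit isomorphism $(-1_{\Omega Y},1_{\Omega Z},1_X,-1_Y)$ is precisely the one needed to invoke (LTR0) after rotating via (LTR2).
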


\begin{lem}(cf. [8])
Let $\mathcal{C}$ be a left triangulated category. Then for any left
triangle $\Omega Z\xrightarrow{x}X\xrightarrow{y}Y\xrightarrow{z}Z$
and any object $U$ of $\mathcal{C}$, there exists an exact sequence

$\cdots\rightarrow$Hom$_{\mathcal{C}}(U,\Omega
Z)\xrightarrow{x_{\ast}}$Hom$_{\mathcal{C}}(U,X)\xrightarrow{y_{\ast}}$
Hom$_{\mathcal{C}}(U,Y)\xrightarrow{z_{\ast}}$Hom$_{\mathcal{C}}(U,Z)$.
\end{lem}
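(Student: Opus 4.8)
The plan is to reduce everything to one categorical fact — that in a left triangle the middle morphism is a pseudokernel of the third — then apply $\text{Hom}_{\mathcal{C}}(U,-)$ and propagate the conclusion to the left using the rotation axiom (LTR2) (equivalently, Lemma 4.4).

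First I would record the following, which is the dual of the first assertion of Lemma 3.4 (and can also be extracted directly from (LTR1) and (LTR3)): in any left triangle $A\xrightarrow{a}B\xrightarrow{b}C\xrightarrow{c}D$ of $\mathcal{C}$, the morphism $b$ is a pseudokernel of $c$. The equality $cb=0$ is the standard vanishing of consecutive compositions in a one-sided triangle. For the universal property, given $t\colon U\to C$ with $ct=0$, apply (LTR3) to the trivial left triangle $0\to U\xrightarrow{1_U}U\to 0$ (a left triangle by (LTR1)) and to $A\xrightarrow{a}B\xrightarrow{b}C\xrightarrow{c}D$, using the pair of morphisms $t\colon U\to C$ and $0\colon 0\to D$; the compatibility $0\cdot 0=c\cdot t$ holds because $ct=0$, so (LTR3) produces $s\colon U\to B$ with $bs=t$. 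Hence $b$ is a pseudokernel of $c$. Applying $\text{Hom}_{\mathcal{C}}(U,-)$ translates this verbatim: $c_{\ast}b_{\ast}=(cb)_{\ast}=0$, and every element of $\ker c_{\ast}$ is of the form $bs=b_{\ast}(s)$, so the sequence $\text{Hom}_{\mathcal{C}}(U,B)\xrightarrow{b_{\ast}}\text{Hom}_{\mathcal{C}}(U,C)\xrightarrow{c_{\ast}}\text{Hom}_{\mathcal{C}}(U,D)$ is exact at the middle term, for \emph{every} left triangle $A\xrightarrow{a}B\xrightarrow{b}C\xrightarrow{c}D$.

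It then remains to realize each consecutive pair of morphisms of the infinite sequence $\cdots\to\Omega X\xrightarrow{-\Omega y}\Omega Y\xrightarrow{-\Omega z}\Omega Z\xrightarrow{x}X\xrightarrow{y}Y\xrightarrow{z}Z$ as the pair $(b,c)$ of middle morphisms of some left triangle. The pair $(y,z)$ is handled by the given triangle $\Omega Z\xrightarrow{x}X\xrightarrow{y}Y\xrightarrow{z}Z$ itself; the pair $(x,y)$ by its first rotate $\Omega Y\xrightarrow{-\Omega z}\Omega Z\xrightarrow{x}X\xrightarrow{y}Y$ (a left triangle by (LTR2)); the pair $(-\Omega z,x)$ by the second rotate $\Omega X\xrightarrow{-\Omega y}\Omega Y\xrightarrow{-\Omega z}\Omega Z\xrightarrow{x}X$; and in general the $n$-th such pair by the $n$-fold rotate obtained by iterating (LTR2). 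Since negating a morphism is a group automorphism of the relevant $\text{Hom}$-groups and so changes neither kernels nor images, the exactness statement of the previous paragraph applied to these triangles yields exactness of the displayed sequence at $\text{Hom}_{\mathcal{C}}(U,Y)$, at $\text{Hom}_{\mathcal{C}}(U,X)$, at $\text{Hom}_{\mathcal{C}}(U,\Omega Z)$, and at every term further to the left, by induction. No exactness is claimed (or holds, in general) at $\text{Hom}_{\mathcal{C}}(U,Z)$, because a left triangle cannot be rotated to the right — this is exactly the asymmetry noted in the introduction.

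The only non-formal ingredient is the pseudokernel statement of the first step; everything afterwards is a mechanical application of $\text{Hom}_{\mathcal{C}}(U,-)$ together with iterated use of (LTR2). I therefore expect that first step — or, more precisely, its $cb=0$ half, which is the part genuinely inherited from the axioms, dual to Lemma 3.4(a) — to be the only place that needs care.
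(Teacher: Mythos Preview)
Your argument is correct and is the standard one: reduce to ``$y$ is a pseudokernel of $z$'' via (LTR1) and (LTR3), then rotate with (LTR2) to propagate exactness leftward. The paper itself gives no proof of this lemma --- it simply cites [8] --- so there is nothing to compare beyond noting that your write-up is exactly the expected dual of Lemma~3.4(a).
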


\begin{defn}
Let $\mathcal{C}$ be a left triangulated category. A subcategory
$\mathcal{M}$ of $\mathcal{C}$ is called a rigid subcategory if
Hom$_{\mathcal{C}}(\Omega\mathcal{M},\mathcal{M})=0$.
\end{defn}

Let $\mathcal{M}$ be a rigid subcategory of $\mathcal{C}$. Denote by
$\Omega\mathcal{M}\ast\mathcal{M}$ the subcategory of objects $X$ in
$\mathcal{C}$ such that there exist left triangles $\Omega
M_{1}\rightarrow X\rightarrow M_{0}\rightarrow M_{1}$, where
$M_{0},M_{1}\in\mathcal{M}$. Now we consider the functor
$H:\Omega\mathcal{M}\ast\mathcal{M}\rightarrow \text{Mod
}\mathcal{M}$ defined by
$H(X)=\text{Hom}_{\mathcal{C}}(\Omega(-),X)|_{\mathcal{M}}$.
 %Since $\mathcal{M}$ is a rigid subcategory, $H(\mathcal{M}) $=0.
%Hence by the universal property of the projection functor
%$\pi:\Omega\mathcal{M}\ast\mathcal{M}\rightarrow\Omega\mathcal{M}\ast\mathcal{M}\big{/}\mathcal{M}$,
%$H$ can induce a functor $
%\overline{H}:\Omega\mathcal{M}\ast\mathcal{M}\big{/}\mathcal{M}\rightarrow
%\text{Mod }\mathcal{M}$.

\begin{lem}
Let $(\mathcal{C},\Omega,\lhd)$ be a left triangulated category and
$\mathcal{M}$ be a rigid subcategory of $\mathcal{C}$. If
$\Omega|_{\mathcal{M}}$ is fully faithful, then for any left
triangle $\Omega
M_{1}\xrightarrow{f}X\xrightarrow{g}M_{0}\xrightarrow{h}M_{1}$ where
$M_0, M_1\in\mathcal{M}$, there is an exact sequence in
Mod$\mathcal{M}$ $$\mbox{Hom}
_{\mathcal{M}}(-,M_{0})\xrightarrow{\text{Hom}_{\mathcal{M}}(-,h)}
\mbox{Hom}_{\mathcal{M}}(-,M_{1})\rightarrow
%{\text{Hom}_{\mathcal{M}}(-,g)}
H(X)\rightarrow0.$$ Thus,
%$\text{Hom}_{\mathcal{C}}(-,X)|_{\mathcal{M}}\in\text{mod
%}\mathcal{M}$.then for any object $X$ in
%$\Omega\mathcal{M}\ast\mathcal{M}$,
$H(X)\in$ mod$\mathcal{M}$.
\end{lem}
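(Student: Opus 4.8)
The plan is to mimic the proof of Lemma 3.7, translating every step to the left-triangulated setting via the shift functor $\Omega$ in place of $\Sigma$. The key point is that a left triangle $\Omega M_{1}\xrightarrow{f}X\xrightarrow{g}M_{0}\xrightarrow{h}M_{1}$ should be rotated (using Lemma 4.5 and Lemma 4.6) so that applying $\text{Hom}_{\mathcal{C}}(\Omega M,-)$ to it yields the desired presentation of $H(X)(M)=\text{Hom}_{\mathcal{C}}(\Omega M,X)$. First I would fix $M\in\mathcal{M}$ and write down, from the long exact sequence of Lemma 4.6 applied to the rotated left triangle $\Omega M_{0}\xrightarrow{\Omega h}\Omega M_{1}\xrightarrow{f}X\xrightarrow{g}M_{0}$ and the object $\Omega M$, the exact piece
$$\text{Hom}_{\mathcal{C}}(\Omega M,\Omega M_{0})\xrightarrow{(\Omega h)_{\ast}}\text{Hom}_{\mathcal{C}}(\Omega M,\Omega M_{1})\xrightarrow{f_{\ast}}\text{Hom}_{\mathcal{C}}(\Omega M,X)\xrightarrow{g_{\ast}}\text{Hom}_{\mathcal{C}}(\Omega M,M_{0}).$$
The final term is where rigidity enters: since $\mathcal{M}$ is rigid, $\text{Hom}_{\mathcal{C}}(\Omega\mathcal{M},\mathcal{M})=0$, so $\text{Hom}_{\mathcal{C}}(\Omega M,M_{0})=0$, forcing $g_{\ast}$ to be zero and hence $f_{\ast}$ surjective onto $\text{Hom}_{\mathcal{C}}(\Omega M,X)$.

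Next I would identify the first two terms with representable functors on $\mathcal{M}$. Because $\Omega|_{\mathcal{M}}$ is fully faithful, the maps $M\mapsto\text{Hom}_{\mathcal{C}}(\Omega M,\Omega M_{i})$ are naturally isomorphic to $\text{Hom}_{\mathcal{M}}(M,M_{i})$ for $i=0,1$, via $\Omega$; and under this identification the morphism $(\Omega h)_{\ast}$ corresponds to $\text{Hom}_{\mathcal{M}}(-,h)$. So the exact sequence becomes, after letting $M$ vary over $\mathcal{M}$ (exactness in $\text{Mod}\,\mathcal{M}$ is checked objectwise),
$$\text{Hom}_{\mathcal{M}}(-,M_{0})\xrightarrow{\text{Hom}_{\mathcal{M}}(-,h)}\text{Hom}_{\mathcal{M}}(-,M_{1})\xrightarrow{f_{\ast}}H(X)\rightarrow0,$$
which is precisely the claimed presentation, with the arrow $H(X)\to 0$ expressing surjectivity of $f_{\ast}$. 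Since $\text{Hom}_{\mathcal{M}}(-,M_{0})$ and $\text{Hom}_{\mathcal{M}}(-,M_{1})$ are finitely generated projective $\mathcal{M}$-modules, $H(X)$ is finitely presented, i.e. $H(X)\in\text{mod}\,\mathcal{M}$.

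The main obstacle I anticipate is purely bookkeeping: getting the signs and the direction of rotation right so that the connecting map in Lemma 4.6 really is (up to sign, which does not affect image/kernel) induced by $h$, and checking that the functoriality in $M$ is genuine — that is, that the Yoneda-type identification $\text{Hom}_{\mathcal{C}}(\Omega-,\Omega M_{i})|_{\mathcal{M}}\cong\text{Hom}_{\mathcal{M}}(-,M_{i})$ is natural and compatible with $h_{\ast}$. There is also a subtlety worth flagging: unlike the right-triangulated case in Lemma 3.7, here no approximation hypothesis (RC2) is needed, because surjectivity of $f_{\ast}$ comes \emph{for free} from rigidity together with the long exact sequence — this is the asymmetry the introduction alludes to, and I would make sure the proof exhibits it cleanly rather than importing an unnecessary analogue of (RC2).
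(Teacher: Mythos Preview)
Your proposal is correct and follows essentially the same argument as the paper: rotate the left triangle, apply the long exact Hom-sequence with test object $\Omega M$, use rigidity to kill $\text{Hom}_{\mathcal{C}}(\Omega M,M_{0})$, and then invoke full faithfulness of $\Omega|_{\mathcal{M}}$ to replace $\text{Hom}_{\mathcal{C}}(\Omega M,\Omega M_{i})$ by $\text{Hom}_{\mathcal{M}}(M,M_{i})$. The only slip is in the lemma numbering---rotation is Lemma~4.4 and the long exact sequence is Lemma~4.5 (Definition~4.6 is the definition of rigid)---but the mathematics is exactly what the paper does.
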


\begin{proof}
For any $X\in\Omega\mathcal{M}\ast\mathcal{M}$, there exists a left
triangle $\Omega
M_{1}\xrightarrow{f}X\xrightarrow{g}M_{0}\xrightarrow{h}M_{1}$,
where $M_{0},M_{1}\in\mathcal{M}$. Then $\Omega
M_{0}\xrightarrow{\Omega h}\Omega
M_{1}\xrightarrow{f}X\xrightarrow{-g}M_{0}$ is a left triangle by
Lemma 4.4. Thus there exists an exact sequence by Lemma 4.5
$$\text{Hom}_{\mathcal{C}}(\Omega M,\Omega M_{0})\xrightarrow
{(\Omega h)_{\ast}}\text{Hom}_{\mathcal{C}}(\Omega M,\Omega
M_{1})\xrightarrow{f_\ast}$$$$\text{Hom}_{\mathcal{C}}(\Omega
M,X)\rightarrow \text{Hom}_{\mathcal{C}}(\Omega M,M_0)=0.$$ Since
$\Omega|_{\mathcal{M}}$ is fully faithful, we have the following
commutative diagram with exact rows
$$\xymatrix{
\text{Hom}_{\mathcal{C}}(M,M_{0})\ar[d]\ar[r]^{h_{\ast}}&
 \text{Hom}_{\mathcal{C}}(M,M_{1})\ar[d]\ar[r]&
\text{Hom}_{\mathcal{C}}(\Omega M,X)\ar[r]\ar@{=}[d]& 0
\\
\text{Hom}_{\mathcal{C}}(\Omega M,\Omega M_{0})\ar[r]^{(\Omega
h)_{\ast}} & \text{Hom}_{\mathcal{C}}(\Omega M,\Omega M_{1})\ar[r]&
\text{Hom}_{\mathcal{C}}(\Omega M,X)\ar[r]& 0  ,}
$$
where $M\in\mathcal{M}$ and the vertical morphisms are isomorphisms.
Thus we have an exact sequence in Mod$\mathcal{C}$
$$\mbox{Hom}_{\mathcal{M}}(-,M_{0})
\xrightarrow{\text{Hom}_{\mathcal{M}}(-,h)}\mbox{Hom}_{\mathcal{M}}(-,M_{1})\rightarrow
H(X)\rightarrow0.$$ So $H(X)\in$ mod$\mathcal{M}.$
\end{proof}

%By the lemma above, $\overline{H}$ can be seen as a functor from
%$\Omega\mathcal{M}\ast\mathcal{M}$ to mod $\mathcal{M}$.

\begin{thm}
Let $\mathcal{C}$ be a left triangulated category, and $\mathcal{M}$
be a rigid subcategory of $\mathcal{C}$ satisfying:

(LC1)\quad $\Omega$ is fully faithful when it is restricted to
$\mathcal{M}$.

(LC2)\quad Let $\Omega M_{1}\xrightarrow{f}X\xrightarrow{g}M_{0}
\xrightarrow{h}M_{1}$ be a left triangle, where $M_0, M_1\in
\mathcal{M}$. Let $Y\in\Omega\mathcal{M}\ast\mathcal{M}$ and a
morphism $t:X\rightarrow Y$  such that $tf=0$, then $t$ factors
through $g$.

Then there exists an equivalence of categories
$\Omega\mathcal{M}\ast\mathcal{M}/\mathcal{M}\cong\mbox{mod}\mathcal{M}$.
\end{thm}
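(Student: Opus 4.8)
The plan is to mirror the proof of Theorem 3.6, working with the functor $H:\Omega\mathcal{M}\ast\mathcal{M}\rightarrow\mbox{mod}\mathcal{M}$ given by $H(X)=\mbox{Hom}_{\mathcal{C}}(\Omega(-),X)|_{\mathcal{M}}$, which is well-defined and lands in $\mbox{mod}\mathcal{M}$ by Lemma 4.7. The goal is to check that $H$ is dense and full, and then to apply Lemma 2.1 with $\mathcal{B}=\mathcal{M}$: that is, to show any $t:X\rightarrow Y$ in $\Omega\mathcal{M}\ast\mathcal{M}$ with $H(t)=0$ factors through an object of $\mathcal{M}$. Note the asymmetry flagged in the introduction: here the subcategory we quotient by is $\mathcal{M}$ itself, not $\Omega\mathcal{M}$, and the right triangle $X\rightarrow\Sigma M_0$ of Section 3 is replaced by $X\xrightarrow{g}M_0$, with $M_0\in\mathcal{M}$ — this is why (LC2) is phrased the way it is rather than as a pure approximation condition.

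For density: given $G\in\mbox{mod}\mathcal{M}$, pick a presentation $\mbox{Hom}_{\mathcal{M}}(-,M_0)\xrightarrow{\mbox{\scriptsize Hom}(-,h)}\mbox{Hom}_{\mathcal{M}}(-,M_1)\rightarrow G\rightarrow0$ arising via Yoneda from some $h:M_0\rightarrow M_1$. By (LTR1) complete $h$ to a left triangle $\Omega M_1\xrightarrow{f}Z\xrightarrow{g}M_0\xrightarrow{h}M_1$; then $Z\in\Omega\mathcal{M}\ast\mathcal{M}$ and Lemma 4.7 gives $H(Z)\cong\mbox{Coker}(\mbox{Hom}_{\mathcal{M}}(-,h))\cong G$. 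For fullness: given $\beta:H(X)\rightarrow H(Y)$, choose left triangles $\Omega M_1\xrightarrow{f_1}X\xrightarrow{g_1}M_0\xrightarrow{h_1}M_1$ and $\Omega N_1\xrightarrow{f_2}Y\xrightarrow{g_2}N_0\xrightarrow{h_2}N_1$ realizing presentations of $H(X)$ and $H(Y)$; lift $\beta$ along the projective covers $\mbox{Hom}_{\mathcal{M}}(-,M_i)$ to obtain $\gamma_1,\gamma_0$, hence by Yoneda morphisms $m_i:M_i\rightarrow N_i$ with $m_1h_1=h_2m_0$. Applying Lemma 4.4 to turn these left triangles into the rotated form $\Omega M_0\xrightarrow{\Omega h_1}\Omega M_1\xrightarrow{f_1}X\xrightarrow{-g_1}M_0$ and using (LTR3) (in the dual direction — the fill-in map goes on the first term) produces $s:X\rightarrow Y$ with $sf_1=f_2(\Omega m_1)$ and compatible with the $m_i$; then Lemma 4.7 forces $H(s)=\beta$ as in the naturality diagram of Theorem 3.6.

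For the last step: suppose $H(t)=0$ for $t:X\rightarrow Y$. Take a left triangle $\Omega M_1\xrightarrow{f}X\xrightarrow{g}M_0\xrightarrow{h}M_1$ with $M_0,M_1\in\mathcal{M}$. The vanishing $H(t)=0$ means $t\circ f=0$ on the nose (evaluating at every $M\in\mathcal{M}$, the map $\mbox{Hom}_{\mathcal{C}}(\Omega M,X)\rightarrow\mbox{Hom}_{\mathcal{C}}(\Omega M,Y)$ kills the image of $f_\ast$; since by Lemma 4.5 applied to the rotated triangle $\mbox{Hom}_{\mathcal{C}}(\Omega M,\Omega M_1)\twoheadrightarrow\mbox{Hom}_{\mathcal{C}}(\Omega M,X)$, and $\Omega M$ ranges over $\Omega\mathcal{M}$, one gets $tf=0$). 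Then (LC2) applies directly: $t$ factors through $g:X\rightarrow M_0$, so $t$ factors through $M_0\in\mathcal{M}$. By Lemma 2.1, $H$ induces $\Omega\mathcal{M}\ast\mathcal{M}/\mathcal{M}\cong\mbox{mod}\mathcal{M}$.

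The main obstacle is the fullness argument: one must be careful about which term of a left triangle receives the dotted fill-in map in (LTR3), and correspondingly which rotation (via Lemma 4.4) puts the data in a form where (LTR3) is applicable; getting the signs and the direction right is where the genuine content lies, whereas density and the final factorization are short. A secondary subtlety is confirming that $H(t)=0$ really yields $tf=0$ and not merely a weaker statement — this rests on the surjectivity part of Lemma 4.5 for the rotated left triangle, i.e. on $\Omega\mathcal{M}$ generating enough maps into $X$, which in turn uses (LC1) implicitly through Lemma 4.7's setup.
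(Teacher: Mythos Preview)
Your proposal follows the same strategy as the paper's proof: establish that $H$ is dense and full, then invoke Lemma~2.1 by showing that $H(t)=0$ forces $t$ to factor through an object of $\mathcal{M}$ via (LC2). Two small points deserve correction, though neither is a genuine gap.

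In the fullness step, the rotation via Lemma~4.4 is unnecessary and, as you describe it, does not set up (LTR3) correctly. After rotating to $\Omega M_0\xrightarrow{\Omega h_1}\Omega M_1\xrightarrow{f_1}X\xrightarrow{-g_1}M_0$, the third and fourth positions are $X$ and $M_0$; (LTR3) would then demand a map $X\to Y$ as \emph{input}, which is precisely what you are trying to construct. The paper applies (LTR3) directly to the unrotated triangles $\Omega M_1\xrightarrow{f_1}X\xrightarrow{g_1}M_0\xrightarrow{h_1}M_1$ and $\Omega N_1\xrightarrow{f_2}Y\xrightarrow{g_2}N_0\xrightarrow{h_2}N_1$: here $m_0$ and $m_1$ already occupy the last two positions with $m_1h_1=h_2m_0$, so (LTR3) yields $s:X\to Y$ with $sf_1=f_2(\Omega m_1)$ and $g_2s=m_0g_1$ immediately. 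Your claimed outcome is correct; only the route to it was misdescribed.

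For the final step, your argument that $H(t)=0$ implies $tf=0$ is more elaborate than needed. Simply evaluate $H(t)=0$ at $M_1\in\mathcal{M}$: this says $t_\ast:\mathrm{Hom}_{\mathcal{C}}(\Omega M_1,X)\to\mathrm{Hom}_{\mathcal{C}}(\Omega M_1,Y)$ is zero, and applying it to $f\in\mathrm{Hom}_{\mathcal{C}}(\Omega M_1,X)$ gives $tf=0$ on the nose. No appeal to surjectivity or the rotated triangle is required.
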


\begin{proof}
According to Lemma 4.7, we have a functor
$H:\Omega\mathcal{M}\ast\mathcal{M}\rightarrow \text{mod
}\mathcal{M}$.

% $\forall X,Y\in\Omega\mathcal{M}\ast\mathcal{M}$,
%there exist two left triangles $\Omega
%M_{1}\xrightarrow{f_{1}}X\xrightarrow{g_{1}}M_{0}
%\xrightarrow{h_{1}}M_{1}$ and $\Omega
%N_{1}\xrightarrow{f_{2}}Y\xrightarrow{g_{2}}N_{0}
%\xrightarrow{h_{2}}N_{1}$, where
%$M_{0},M_{1},N_{0},N_{1}\in\mathcal{M}$. Then by Remark 4.2, there
%exist two left triangles $\Omega M_{0}\xrightarrow{\Omega
%h_{1}}\Omega M_{1}\xrightarrow{f_{1}}X\xrightarrow{-g_{1}}M_{0}$ and
%$\Omega N_{0}\xrightarrow{\Omega h_{2}}\Omega
%N_{1}\xrightarrow{f_{2}}Y\xrightarrow{-g_{2}}N_{0}$.

 Firstly, we show that $H$ is dense.

For any object $G\in$ mod $\mathcal{M}$, there exists an exact
sequence
$$\text{Hom}_{\mathcal{M}}(-,M')\xrightarrow{\alpha}\text{Hom}_{\mathcal{M}}(-,M'')\rightarrow
G\rightarrow0$$ with $M',M''\in\mathcal{M}$. By Yoneda's Lemma,
there exists a morphism $h:M'\rightarrow M''$ such that
$\alpha$=Hom$_{\mathcal{M}}(-,h)$. Then by (LTR1), there exists a
left triangle $\Omega
M''\xrightarrow{f}Z\xrightarrow{g}M'\xrightarrow{h}M''$.  Hence by
Lemma 4.7, there exists an exact sequence
$$\text{Hom}_{\mathcal{M}}(-,
M')\xrightarrow{\alpha}\text{Hom}_{\mathcal{M}}(-,M'')\rightarrow
H(Z)\rightarrow0,$$ so $G$=Coker$\alpha\cong H(Z)$. Hence $H$ is
dense.

Secondly, we show that $H$ is full.

For any morphism $\beta:H(X)\rightarrow H(Y)$ in mod$\mathcal{M}$.
By Lemma 4.7 and because Hom$_{\mathcal{M}}(-,M_{1})$ is a
projective object of mod$\mathcal{M}$, we have the following
commutative diagram with exact rows in Mod$\mathcal{M}$
$$\xymatrix{
\text{Hom}_{\mathcal{M}}(-,M_{0})\ar[rr]^{\text{Hom}_{\mathcal{M}}(-,h_{1})}\ar[d]^{\gamma_{0}}&
& \text{Hom}_{\mathcal{M}}(-,M_{1})\ar[r]\ar[d]^{\gamma_{1}} &
H(X)\ar[d]^{\beta}\ar[r] & 0 \\
\text{Hom}_{\mathcal{M}}(-,N_{0})\ar[rr]^{\text{Hom}_{\mathcal{M}}(-,h_{2})}&
& \text{Hom}_{\mathcal{M}}(-,N_{1})\ar[r]& H(Y)\ar[r] & 0.} $$ By
Yoneda's Lemma, for $i=0,1$, there exists a morphism
$m_{i}:M_{i}\rightarrow N_{i}$ such that
$\gamma_{i}$=Hom$_{\mathcal{M}}(-, m_{i})$ and
$m_{1}h_{1}$=$h_{2}m_{0}$. Hence by (LTR3), we have the following
commutative diagram of left triangles
$$\xymatrix{
\Omega M_{1}\ar[r]^{f_{1}}\ar[d]^{\Omega m_{1}} &
X\ar[r]^{g_{1}}\ar@{-->}[d]^{s} &
M_{0}\ar[r]^{h_{1}}\ar[d]^{m_{0}} & M_{1}\ar[d]^{m_{1}} \\
\Omega N_{1}\ar[r]^{f_{2}} & Y\ar[r]^{g_{2}} & N_{0}\ar[r]^{h_{2}} &
N_{1}. }
$$
According to the proof of Lemma 4.7, for any object
$M\in\mathcal{M}$, we have the following commutative diagram with
exact columns.
%$${\tiny
%\xymatrix{ & & (M,M_{0})\ar[ld]\ar[dd]_{
%m_{0\ast}}\ar[rr]^{h_{1\ast}} & &
%(M,M_{1})\ar[ld]\ar[rr]\ar[dd]_{m_{1\ast}}  & &
%(\Omega M,X)\ar@{=}[ld]\ar[r]\ar[dd]_{s_{\ast}} & 0 \\
% & (\Omega M,\Omega M_{0})\ar[rr]^{h_{2\ast}}\ar[dd]_{(\Omega m_0)_\ast}  & & (\Omega M,\Omega M_{1})
% \ar[rr]\ar[dd]_{(\Omega m_1)_\ast}  & &(\Omega M,X)\ar[rr]\ar[dd]_{s_{\ast}} & & 0 \\
% & & \Omega N''\ar[ld]_{f_{2}}\ar[rr]^{i_{N''}} & &
%  P_{N''}\ar[ld]_{p_{N}}\ar[rr]^{d_{N''}}  & &
%N''\ar@{=}[ld]\ar[r] & 0 \\
% & Y\ar[rr]^{g_{2}}  & & N'\ar[rr]^{h_{2}}  & & N''\ar[rr] &
%& 0,}}
%$$
$${\tiny\xymatrix{
&\text{Hom}_{\mathcal{C}}(M,M_{0})\ar[dd]_{h_{1\ast}}\ar[dl]_{m_{0\ast}}\ar[rr]&&\text{Hom}_{\mathcal{C}}(\Omega
M,\Omega
M_{0})\ar[dl]_{(\Omega m_{0})_{\ast}}\ar[dd]_{(\Omega h_{1})_{\ast}}\\
\text{Hom}_{\mathcal{C}}(M,N_{0})\ar[rr]\ar[dd]_{h_{2\ast}}&&\text{Hom}_{\mathcal{C}}(\Omega M,\Omega N_{0})\ar[dd]_{(\Omega h_{2})_{\ast}}&\\
&\text{Hom}_{\mathcal{C}}(M,M_{1})\ar[dl]_{m_{1\ast}}\ar[rr]\ar[dd]&&
\text{Hom}_{\mathcal{C}}(\Omega M,\Omega M_{1})\ar[dl]_{(\Omega m_{1})_{\ast}}\ar[dd]\\
\text{Hom}_{\mathcal{C}}(M,N_{1})\ar[rr]\ar[dd]&&\text{Hom}_{\mathcal{C}}(\Omega
M,\Omega N_{1})\ar[dd]&\\
&\text{Hom}_{\mathcal{C}}(\Omega M,X)\ar[ld]_{s_{\ast}}\ar@{=}[rr]\ar[dd]&&\text{Hom}_{\mathcal{C}}(\Omega M,X)\ar[ld]_{s_{\ast}}\ar[dd]\\
\text{Hom}_{\mathcal{C}}(\Omega M,Y)\ar@{=}[rr]\ar[d] &&\text{Hom}_{\mathcal{C}}(\Omega M,Y)\ar[d] &\\
0&0&0&0. }}
$$
Thus we have the following commutative diagram with exact rows in
Mod $\mathcal{M}$
$${\footnotesize\xymatrix{
\text{Hom}_{\mathcal{M}}(-,M_{0})\ar[rr]^{\text{Hom}_{\mathcal{M}}(-,h_{1})}\ar[d]^{\gamma_{0}}&
& \text{Hom}_{\mathcal{M}}(-,M_{1})\ar[r]\ar[d]^{\gamma_{1}} &
H(X)\ar[d]^{H(s)}\ar[r] & 0 \\
\text{Hom}_{\mathcal{M}}(-,N_{0})\ar[rr]^{\text{Hom}_{\mathcal{M}}(-,h_{2})}&
& \text{Hom}_{\mathcal{M}}(-,N_{1})\ar[r]& H(Y)\ar[r] & 0.}}
$$
So $\beta=H(s)$. Hence $H$ is full.

At last, let $X, Y$ be objects of
$\Omega\mathcal{M}\ast\mathcal{M}$. We have a left triangle $\Omega
M_{1}\xrightarrow{f}X\xrightarrow{g}M_{0} \xrightarrow{h}M_{1}$,
where $M_0, M_1\in \mathcal{M}$. Let  $t: X\rightarrow Y$ be a
morphism with $H(t)=0$, then $tf=0$. Thus $t$ factors through $M_0$
by (LC2). So
$\Omega\mathcal{M}\ast\mathcal{M}/\mathcal{M}\cong\text{mod}
\mathcal{M}$ by Lemma 2.2.
\end{proof}

Since a triangulated category is a left triangulated category such
that the shift functor is an equivalence, the conditions (LC1) and
(LC2) holds automatically. Thus we have the following corollary.

\begin{cor}
Let $\mathcal{C}$ be a triangulated category with the shift functor
$T$ and $\mathcal{M}$ be a rigid subcategory of $\mathcal{C}$, then
$T^{-1}\mathcal{M}\ast\mathcal{M}/\mathcal{M}\cong\text{mod}
\mathcal{M}$.
\end{cor}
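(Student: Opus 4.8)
The plan is to deduce this directly from Theorem 4.8 by checking that a triangulated category, viewed as a left triangulated category with shift functor $\Omega=T^{-1}$, automatically satisfies hypotheses (LC1) and (LC2). Recall that a triangulated category $(\mathcal{C},T)$ becomes a left triangulated category by setting $\Omega=T^{-1}$ and declaring the left triangles to be the sequences $\Omega Z\xrightarrow{x}X\xrightarrow{y}Y\xrightarrow{z}Z$ for which $X\xrightarrow{y}Y\xrightarrow{z}Z\xrightarrow{-Tx}TX$ is a distinguished triangle (this is Example 4.2); under this identification $\Omega\mathcal{M}\ast\mathcal{M}=T^{-1}\mathcal{M}\ast\mathcal{M}$.

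First I would verify (LC1): since $T$ is an equivalence, so is $T^{-1}$, hence $\Omega$ is fully faithful on all of $\mathcal{C}$ and in particular on $\mathcal{M}$. Next I would verify (LC2). Let $\Omega M_{1}\xrightarrow{f}X\xrightarrow{g}M_{0}\xrightarrow{h}M_{1}$ be a left triangle with $M_{0},M_{1}\in\mathcal{M}$, let $Y$ be any object, and let $t:X\rightarrow Y$ satisfy $tf=0$. By definition $X\xrightarrow{g}M_{0}\xrightarrow{h}M_{1}\xrightarrow{-Tf}TX$ is a distinguished triangle in $\mathcal{C}$; applying the cohomological functor $\mathrm{Hom}_{\mathcal{C}}(-,Y)$ gives an exact sequence
$$\mathrm{Hom}_{\mathcal{C}}(M_{0},Y)\xrightarrow{g^{\ast}}\mathrm{Hom}_{\mathcal{C}}(X,Y)\xrightarrow{f^{\ast}}\mathrm{Hom}_{\mathcal{C}}(\Omega M_{1},Y),$$
so $f^{\ast}(t)=tf=0$ forces $t\in\mathrm{Im}\,g^{\ast}$, i.e.\ $t=sg$ for some $s:M_{0}\rightarrow Y$. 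Thus $t$ factors through $g$, and (LC2) holds (in fact for arbitrary $Y$, with no need for the rigidity of $\mathcal{M}$ or for $Y\in\Omega\mathcal{M}\ast\mathcal{M}$). Equivalently, one may simply invoke that $g$ is a weak cokernel of $f$ in any triangulated category.

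With (LC1) and (LC2) in hand, Theorem 4.8 applies verbatim to the left triangulated category $(\mathcal{C},T^{-1})$ and the rigid subcategory $\mathcal{M}$, yielding the equivalence $\Omega\mathcal{M}\ast\mathcal{M}/\mathcal{M}\cong\mathrm{mod}\,\mathcal{M}$, which is exactly $T^{-1}\mathcal{M}\ast\mathcal{M}/\mathcal{M}\cong\mathrm{mod}\,\mathcal{M}$. There is essentially no obstacle here; the only point requiring a line of care is the bookkeeping of signs and of the shift when passing between "distinguished triangle in $\mathcal{C}$" and "left triangle in $(\mathcal{C},T^{-1})$", together with the observation that rigidity $\mathrm{Hom}_{\mathcal{C}}(\Omega\mathcal{M},\mathcal{M})=0$ in the sense of Definition 4.6 is the usual condition $\mathrm{Hom}_{\mathcal{C}}(T^{-1}\mathcal{M},\mathcal{M})\cong\mathrm{Hom}_{\mathcal{C}}(\mathcal{M},T\mathcal{M})=0$, so that the hypothesis "$\mathcal{M}$ rigid" here coincides with the standard one.
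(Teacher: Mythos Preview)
Your proposal is correct and follows exactly the paper's approach: the paper simply remarks (just before the corollary) that since a triangulated category is a left triangulated category whose shift functor is an equivalence, conditions (LC1) and (LC2) hold automatically, and then applies Theorem~4.8. Your verification of (LC1) via $T^{-1}$ being an equivalence and of (LC2) via the pseudocokernel property of $g$ merely spells out what the paper leaves implicit.
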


%We denote $T^{-1}=\Omega$. Then we can easily prove that
%($\mathcal{C},\Omega,\varepsilon$) is a left triangulated category,
%i.e. every triangle $\Omega Z\rightarrow X\rightarrow Y\rightarrow
%Z$ is also a left triangle. Let $\mathcal{M}$ be a rigid subcategory
%of triangulated category $\mathcal{C}$. Then $\mathcal{M}$ is a
%rigid subcategory of left triangulated category $\mathcal{C}$.
%Denote by $\Omega\mathcal{M}\ast\mathcal{M}$ the subcategory of
%objects $X$ in $\mathcal{C}$ such that there exist left triangles
%$\Omega M_{1}\rightarrow X\rightarrow M_{0}\rightarrow M_{1}$, where
%$M_{0},M_{1}\in\mathcal{M}$. Since
%$\Omega:\mathcal{C}\rightarrow\mathcal{C}$ is auto-equivalent, so
%$\Omega|_{\mathcal{M}}$ is fully faithful and
%$\overline{H}:\Omega\mathcal{M}\ast\mathcal{M}\big{/}\mathcal{M}\rightarrow\text{mod
%}\mathcal{M}$ is faithful. Hence by Corollary 3.2.5, there exists an
%equivalence of categories:
%$\Omega\mathcal{M}\ast\mathcal{M}\big{/}\mathcal{M}\cong\text{mod
%}\mathcal{M}\cong\text{mod }\Omega\mathcal{M}$, which coincides with
%Proposition 6.2 in [4].

\begin{cor} ([5], Theorem 3.2)
Let $\mathcal{B}$ be an exact category which contains enough
projectives, and $\mathcal{M}$ be a rigid subcategory of
$\mathcal{B}$ containing all projectives. Denote by $\mathcal{P}$
the subcategory of projectives, and by $\underline{\mathcal{M}}$ the
quotient category $\mathcal{M}/\mathcal{P}$. Denote by
$\mathcal{M}_{L}$ the subcategory of objects $X$ in $\mathcal{B}$
such that there exist short exact sequences $0\rightarrow
X\rightarrow M_{0}\rightarrow M_{1}\rightarrow0$, where
$M_{0},M_{1}\in\mathcal{M}$. Then $\mathcal{M}_{L}\big{/}\mathcal{M}
\cong$ mod$\underline{\mathcal{M}}$.
\end{cor}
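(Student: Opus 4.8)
The plan is to deduce this from Theorem 4.8 by an argument parallel to (but, as the introduction warns, not literally dual to) the proof of Corollary 3.10, carried out in the left triangulated category $\underline{\mathcal{B}}=\mathcal{B}/\mathcal{P}$ of Example 4.3. Write $\Omega$ for its shift functor, realised by choosing for each $X\in\mathcal{B}$ a short exact sequence $0\to\Omega X\xrightarrow{i_{X}}P_{X}\xrightarrow{d_{X}}X\to 0$ with $P_{X}\in\mathcal{P}$. First, $\underline{\mathcal{M}}$ is a rigid subcategory of $\underline{\mathcal{B}}$: given $\underline{\varphi}\colon\Omega M\to M'$ with $M,M'\in\mathcal{M}$, applying $\mathrm{Hom}_{\mathcal{B}}(-,M')$ to $0\to\Omega M\xrightarrow{i_{M}}P_{M}\to M\to 0$ and using $\mathrm{Ext}^{1}_{\mathcal{B}}(M,M')=0$ shows $i_{M}^{\ast}$ surjective, so $\varphi$ factors through $P_{M}\in\mathcal{P}$ and $\underline{\varphi}=0$. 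Second, $\Omega\underline{\mathcal{M}}\ast\underline{\mathcal{M}}=\underline{\mathcal{M}_{L}}:=\mathcal{M}_{L}/\mathcal{P}$, by the construction dual to Example 3.3 (a short exact sequence $0\to X\to M_{0}\to M_{1}\to 0$ with $M_{i}\in\mathcal{M}$ gives a left triangle $\Omega M_{1}\to X\to M_{0}\to M_{1}$, and conversely such a left triangle is, up to isomorphism, induced by a short exact sequence whose middle term is $M_{0}$ enlarged by a projective, which still lies in $\mathcal{M}$ since $\mathcal{P}\subseteq\mathcal{M}$) — exactly as in step (b) of the proof of Corollary 3.10. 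Since $\mathcal{P}\subseteq\mathcal{M}\subseteq\mathcal{M}_{L}$ and $\underline{\mathcal{M}}=\mathcal{M}/\mathcal{P}$, Lemma 2.2 gives $\mathcal{M}_{L}/\mathcal{M}\cong\underline{\mathcal{M}_{L}}/\underline{\mathcal{M}}=(\Omega\underline{\mathcal{M}}\ast\underline{\mathcal{M}})/\underline{\mathcal{M}}$, so it remains to check that $\underline{\mathcal{M}}\subseteq\underline{\mathcal{B}}$ satisfies (LC1) and (LC2) and then to invoke Theorem 4.8.

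\emph{Checking (LC1).} For $M\in\mathcal{M}$ the map $i_{M}\colon\Omega M\to P_{M}$ is a left $\mathcal{M}$-approximation, since for $N\in\mathcal{M}$ the exact sequence $\mathrm{Hom}_{\mathcal{B}}(P_{M},N)\to\mathrm{Hom}_{\mathcal{B}}(\Omega M,N)\to\mathrm{Ext}^{1}_{\mathcal{B}}(M,N)=0$ makes $i_{M}^{\ast}$ surjective. Given this, full faithfulness of $\Omega|_{\underline{\mathcal{M}}}$ is proved dually to step (d) of the proof of Corollary 3.10: for fullness, a morphism $\alpha\colon\Omega M'\to\Omega M''$ is lifted by factoring $i_{M''}\alpha$ through $i_{M'}$ (legitimate since $P_{M''}\in\mathcal{P}\subseteq\mathcal{M}$) to get $j\colon P_{M'}\to P_{M''}$, and then factoring $d_{M''}j$ (which kills $i_{M'}$) through $d_{M'}$ to get $m\colon M'\to M''$ with $\Omega\underline{m}=\underline{\alpha}$; for faithfulness, if $\Omega\underline{f}=0$ then a representative of the induced map on syzygies factors through a projective, hence through $i_{M'}$, and a short chase using that $d_{M'}$ is epic produces a factorisation of $f$ through $P_{M''}$, so $\underline{f}=0$.

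\emph{Checking (LC2).} By the dual of Example 3.3 (and $\mathcal{P}\subseteq\mathcal{M}$, to absorb a projective into the middle term) we may assume the given left triangle is induced by a short exact sequence $0\to X\xrightarrow{i}M_{0}\xrightarrow{p}M_{1}\to 0$ with $g=\underline{i}$, $h=\underline{p}$, and $f=\underline{\partial}$, the connecting morphism $\partial\colon\Omega M_{1}\to X$ being determined by $i\partial=\iota\, i_{M_{1}}$ for a lift $\iota\colon P_{M_{1}}\to M_{0}$ of $p$ (so $p\iota=d_{M_{1}}$) along $0\to\Omega M_{1}\xrightarrow{i_{M_{1}}}P_{M_{1}}\xrightarrow{d_{M_{1}}}M_{1}\to 0$; similarly we may assume $Y\in\mathcal{M}_{L}$ with $0\to Y\xrightarrow{j}N_{0}\xrightarrow{q}N_{1}\to 0$, $N_{i}\in\mathcal{M}$. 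Let $t\colon X\to Y$ satisfy $\underline{t}\,\underline{f}=0$. Since the cokernel of $i$ is $M_{1}\in\mathcal{M}$ and $\mathrm{Ext}^{1}_{\mathcal{B}}(M_{1},N)=0$ for $N\in\mathcal{M}$, $i$ is a left $\mathcal{M}$-approximation, so $jt=s'i$ for some $s'\colon M_{0}\to N_{0}$, and then $qs'i=qjt=0$ forces $qs'=rp$ for some $r\colon M_{1}\to N_{1}$. Because $\underline{t}\,\underline{f}=0$, $t\partial$ factors through a projective $Q$; as $M_{1},Q\in\mathcal{M}$ and $\mathcal{M}$ is rigid, $\mathrm{Ext}^{1}_{\mathcal{B}}(M_{1},Q)=0$, so this factorisation may be pushed through $i_{M_{1}}$, giving $t\partial=c\, i_{M_{1}}$ for some $c\colon P_{M_{1}}\to Y$. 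Now $jc\, i_{M_{1}}=jt\partial=s'i\partial=s'\iota\, i_{M_{1}}$, so $jc-s'\iota=w\, d_{M_{1}}$ for some $w\colon M_{1}\to N_{0}$; applying $q$ and using $qj=0$, $qs'=rp$, $p\iota=d_{M_{1}}$ and $d_{M_{1}}$ epic yields $r=-qw$. Hence $s:=s'+wp\colon M_{0}\to N_{0}$ has $qs=rp-rp=0$, so $s=j\hat{s}$ for some $\hat{s}\colon M_{0}\to Y$, and $j\hat{s}i=si=s'i=jt$ with $j$ monic gives $\hat{s}i=t$; thus $\underline{t}=\underline{\hat{s}}\,\underline{g}$, so $t$ factors through $g$, which is (LC2). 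Theorem 4.8 now gives $(\Omega\underline{\mathcal{M}}\ast\underline{\mathcal{M}})/\underline{\mathcal{M}}\cong\mathrm{mod}\,\underline{\mathcal{M}}$, and combining this with the equivalence from the first paragraph gives $\mathcal{M}_{L}/\mathcal{M}\cong\mathrm{mod}\,\underline{\mathcal{M}}$.

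\emph{Main obstacle.} All of the above is either a routine $\mathrm{Ext}$ computation or a formal dualisation of what was already done for Corollary 3.10, except for one point in the verification of (LC2): the ``straightening'' that upgrades ``$t\partial$ factors through some projective'' to ``$t\partial$ factors through $i_{M_{1}}$''. Conceptually this is the identification $\mathrm{Hom}_{\underline{\mathcal{B}}}(\Omega M_{1},A)\cong\mathrm{Ext}^{1}_{\mathcal{B}}(M_{1},A)$, which holds because $\mathrm{Ext}^{1}_{\mathcal{B}}(M_{1},Q)=0$ for every projective $Q$ — and this is precisely where the hypothesis $\mathcal{P}\subseteq\mathcal{M}$ is genuinely needed. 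On the right-triangulated side the corresponding hypothesis (RC2) is a bare approximation property requiring no such vanishing, which is exactly the asymmetry anticipated in the introduction.
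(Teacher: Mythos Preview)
Your proof is correct, and your overall strategy (reduce to Theorem~4.8 by verifying rigidity, $\underline{\mathcal{M}_L}=\Omega\underline{\mathcal{M}}\ast\underline{\mathcal{M}}$, Lemma~2.2, (LC1), (LC2)) is exactly the paper's. The only substantive difference is in the verification of (LC2). The paper lifts $g_2t$ through the left $\mathcal{M}$-approximation $g_1$ to get a morphism of short exact sequences $(t,m_1,m_2)$, then works in the left triangulated category $\underline{\mathcal{B}}$: it shows the induced square $\underline{tf_1}=\underline{f_2}\,\Omega\underline{m_2}$ commutes, deduces from $\underline{tf_1}=0$ and Lemma~4.5 that $\Omega\underline{m_2}$ factors through $\Omega\underline{h_2}$, and then \emph{invokes (LC1)} to descend this to a factorisation $m_2=h_2n$ (after a projective adjustment), finishing as you do. You instead stay entirely in the exact category $\mathcal{B}$: having lifted $jt=s'i$ and $qs'=rp$, you use the explicit connecting morphism $\partial$ and the ``straightening'' $t\partial=c\,i_{M_1}$ to deduce $r=-qw$ directly. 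What this buys you is that (LC2) is established without appealing to (LC1), making the two conditions logically independent in this example; the paper's route keeps the argument inside the left-triangulated formalism, which is more in the spirit of the section but makes (LC2) rest on (LC1). One minor comment: your remark that the straightening step is ``precisely where $\mathcal{P}\subseteq\mathcal{M}$ is genuinely needed'' overstates things slightly --- that hypothesis is also used to absorb projectives into the middle term when identifying $\underline{\mathcal{M}_L}$ with $\Omega\underline{\mathcal{M}}\ast\underline{\mathcal{M}}$ --- but this does not affect the proof.
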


\begin{proof}
Similar to the proof of Corollary 3.10, we can prove that
$\underline{\mathcal{M}}$ is a rigid subcategory of the left
triangulated category $\underline{\mathcal{B}}$, and
$\underline{\mathcal{M}_{L}}=\Omega\underline{\mathcal{M}}\ast\underline{\mathcal{M}}$,
and
$\mathcal{M}_{L}\big{/}\mathcal{M}\cong\underline{\mathcal{M}_{L}}\big{/}\underline{\mathcal{M}}$,
and $\Omega|_{\underline{\mathcal{M}}}$ is fully faithful. To end
the proof, we only need to show that $\underline{\mathcal{M}}$
satisfies the condition (LC2).

 In fact, let $\Omega
M''\xrightarrow{\underline{f_{1}}}X\xrightarrow{\underline{g_{1}}}M'\xrightarrow{\underline{h_{1}}}M''$
be a left triangle in $\underline{\mathcal{B}}$, where
$M',M''\in\mathcal{M}$. Since $\mathcal{P}\subset\mathcal{M}$, we
can assume that $0\rightarrow X\xrightarrow{g_1
}M'\xrightarrow{h_1}M''\rightarrow0$ is a short exact sequence.
%so there exists a short exact sequence $0\rightarrow
%X\xrightarrow{g_{1}}M'\xrightarrow{h_{1}}M''\rightarrow0$.
 Let
$t:X\rightarrow Y$ be a morphism satisfying $\underline{tf_{1}}=0$,
where $Y\in\mathcal{M}_{L}$. Then there exists a short exact
sequence $0\rightarrow
Y\xrightarrow{g_{2}}N'\xrightarrow{h_{2}}N''\rightarrow0$, where
$N',N''\in\mathcal{M}$. Since $\mathcal{M}$ is rigid, it is easy to
see that $g_{1}$ is a left $\mathcal{M}$-approximation, then we have
the following commutative diagram with exact rows in $\mathcal{B}$
$$\xymatrix{
0\ar[r] & X\ar[d]_{t}\ar[r]^{g_{1}} & M'\ar[r]^{h_{1}}\ar[d]_{m_{1}}
& M''\ar[r]\ar[d]_{m_{2}}
& 0 \\
0\ar[r] & Y\ar[r]^{g_{2}} & N'\ar[r]^{h_{2}} & N''\ar[r] & 0.}
$$
The lower exact sequence induces a left triangle $\Omega
N''\xrightarrow{\underline{f_{2}}}Y\xrightarrow{\underline{g_{2}}}N'\xrightarrow{\underline{h_{2}}}N''$.
We claim that
$\underline{tf_{1}}=\underline{f_{2}}\Omega\underline{m_{2}}$. In
fact,  we have the following diagram with exact rows in
$\mathcal{B}$
$${\footnotesize\xymatrix{
0\ar[rr] & & \Omega M''\ar[ld]_{f_{1}}\ar[dd]_{\Omega
m_{2}}\ar[rr]^{i_{M''}} & &
 P_{M''}\ar[ld]_{p_{M}}\ar[rr]^{d_{M''}}\ar[dd]_{p}  & &
M''\ar@{=}[ld]\ar[r]\ar[dd]_{m_{2}} & 0 \\
0\ar[r] & X\ar[rr]^{g_{1}}\ar[dd]_{t}  & & M'\ar[rr]^{h_{1}}\ar[dd]_{m_{1}}  & & M''\ar[rr]\ar[dd]_{m_{2}} & & 0 \\
0\ar[rr] & & \Omega N''\ar[ld]_{f_{2}}\ar[rr]^{i_{N''}} & &
  P_{N''}\ar[ld]_{p_{N}}\ar[rr]^{d_{N''}}  & &
N''\ar@{=}[ld]\ar[r] & 0 \\
0\ar[r] & Y\ar[rr]^{g_{2}}  & & N'\ar[rr]^{h_{2}}  & & N''\ar[rr] &
& 0,}}
$$
where $P_{M''},P_{N''}\in\mathcal{P}$,  and all squares are
commutative except the left one and the middle one.
%$\xymatrix{P_{M''}\ar[r]^{p_{M}}\ar[d]_{p} & M'\ar[d]_{m_{1}}\\
%P_{N''}\ar[r]_{p_{N}} & N'
%}$ and $\xymatrix{\Omega M''\ar[r]^{f_{1}}\ar[d]_{\Omega m_{2}} & X.\ar[d]_{t}\\
%\Omega N''\ar[r]_{f_{2}} & Y }$
Since $h_{2}(m_{1}p_{M}-p_{N}p)=m_{2}d_{M''}-m_{2}d_{M''}=0$, there
exists a morphism $q:P_{M''}\rightarrow Y$ such that
$g_{2}q=m_{1}p_{M}-p_{N}p$. Then $g_{2}(tf_{1}-f_{2}\Omega
m_{2}-qi_{M''})=(m_{1}p_{M}-p_{N}p)i_{M''}-(m_{1}p_{M}-p_{N}p)i_{M''}=0$.
Since $g_{2}$ is a monomorphism, we get $tf_{1}-f_{2}\Omega
m_{2}=qi_{M''}$. Thus
$\underline{tf_{1}}=\underline{f_{2}}\Omega\underline{m_{2}}$. Hence
we have the following commutative diagram of left triangles in
$\underline{\mathcal{B}}$
$$\xymatrix{
\Omega M''\ar[d]_{\Omega\underline{m_{2}}}\ar[r]^{\underline{f_{1}}}
& X\ar[d]_{\underline{t}}\ar[r]^{\underline{g_{1}}} &
M'\ar[r]^{\underline{h_{1}}}\ar[d]_{\underline{m_{1}}} & M''\ar[d]_{\underline{m_{2}}}   \\
\Omega N''\ar[r]^{\underline{f_{2}}} & Y\ar[r]^{\underline{g_{2}}} &
N'\ar[r]^{\underline{h_{2}}} & N''. }
$$
By Lemma 4.4, we have the following commutative diagram of left
triangles in $\underline{\mathcal{B}}$
$$\xymatrix{
\Omega
M'\ar[d]_{\Omega\underline{m_{1}}}\ar[r]^{\Omega\underline{h_{1}}} &
\Omega M''\ar[d]_{\Omega\underline{m_{2}}}\ar[r]^{\underline{f_{1}}}
&
X\ar[r]^{\underline{-g_{1}}}\ar[d]_{\underline{t}} & M'\ar[d]_{\underline{m_{1}}}   \\
\Omega N'\ar[r]^{\Omega\underline{h_{2}}} & \Omega
N''\ar[r]^{\underline{f_{2}}} & Y\ar[r]^{\underline{-g_{2}}} & N'. }
$$
Since $\underline{f_{2}}\Omega
\underline{m_{2}}=\underline{tf_{1}}=0$, there exists a morphism
$n':\Omega M''\rightarrow\Omega N'$ such that
$\Omega\underline{m_{2}}=(\Omega\underline{h_{2}})\underline{n'}$.
Because $\Omega|_{\mathcal{M}}$ is fully faithful, there exists a
morphism $n_{1}:M''\rightarrow N'$ such that
$\underline{n'}=\Omega\underline{n_{1}}$ and
$\underline{m_{2}}=\underline{h_{2}n_{1}}$. Hence $m_{2}-h_{2}n_{1}$
factors through $P\in\mathcal{P}$. Since $h_{2}$ is a epimorphism,
we have the following commutative diagram in $\mathcal{B}$:
$$\xymatrix{
  & &
M''\ar[ld]_{a}\ar[dd]^{m_{2}-h_{2}n_{1}}
&  \\
 & P\ar[ld]_{c}\ar[rd]^{b} & \\
 N'\ar[rr]^{h_{2}} & & N''. & }
$$
Let $n=ca+n_{1}$. Then $m_{2}=h_2n_1+ba=h_2n_1+h_2ca=h_{2}n$ and
$\underline{n}=\underline{n_{1}}$.
%Hence for the following
%commutative diagram with exact rows in $\mathcal{B}$:
%$$\xymatrix{
%0\ar[r] & X\ar[d]_{t}\ar[r]^{g_{1}} & M'\ar[r]^{h_{1}}\ar[d]_{m_{1}}
%& M''\ar[r]\ar[d]^{m_{2}}
%& 0 \\
%0\ar[r] & Y\ar[r]^{g_{2}} & N'\ar[r]^{h_{2}} & N''\ar[r] & 0,}
%$$
Since $h_{2}(m_{1}-nh_{1})=h_{2}m_{1}-m_{2}h_{1}=0$, there exists a
morphism $s:M'\rightarrow Y$ such that $g_{2}s=m_{1}-nh_{1}$. Hence
$g_{2}(t-sg_{1})=g_{2}t-m_{1}g_{1}+nh_{1}g_{1}=0$. Because  $g_{2}$
is a monomorphism, $t=sg_{1}$, i.e. $t$ factors through $g_{1}$.
Hence $\underline{t}$ factors through $\underline{g_{1}}$ in
$\underline{\mathcal{B}}$.
\end{proof}

 %We have $\mathcal{P}\subset\mathcal{M}\subset\mathcal{M}_{L}$.

\par

\end{document}